\newcommand{\Z}{\mathbb Z}
\newcommand{\Q}{\mathbb Q}
\newcommand{\N}{\mathbb N}
\newcommand{\F}{\mathbb F}
\newcommand{\R}{\mathbb R}
\newcommand{\Cal}{\EuScript}
\newcommand{\XOR}{\mathbin{\mathsf{XOR}}}
\newcommand{\OR}{\mathbin{\mathsf{OR}}}
\newcommand{\AND}{\mathbin{\mathsf{AND}}}
\newcommand{\NOT}{\operatorname{\mathsf{NOT}}}
\newcommand{\ord}{\operatorname{ord}}
\newcommand{\md}{\mathbin{\mathsf{mod}}}
\renewcommand{\:}{\colon}
\renewcommand{\>}{\rightarrow}
\newtheorem{thm}{Theorem}[section] 
\newtheorem{cor}[thm]{Corollary}
\theoremstyle{definition}
\newtheorem{defn}[thm]{Definition}
\newtheorem*{defn*}{Definition}
\newtheorem{note}[thm]{Note}
\newtheorem{exmp}[thm]{Example}
\title[The Non-Archimedean Theory of Discrete Systems]{The Non-Archimedean Theory\\ of Discrete Systems}\thanks{Supported in
parts by Russian Foundation for Basic Research grant No 12-01-00680-a and by Chinese Academy of Sciences visiting professorship for senior international
scientists grant No  2009G2-11
}
\author{Vladimir Anashin}
\address{
Faculty of Computational Mathematics and Cybernetics\\
Lomonosov Moscow State University\\
Leninskie Gory, 1\\
119991 Moscow\\
Russia}
\email{vs-anashin@yandex.ru; vladimir.anashin@u-picardie.fr}
\begin{document}

\subjclass{Primary 37A25; Secondary 11B52, 11E95, 11K41, 11K45, 68Q70}

\keywords{Automaton, discrete system, ergodicity, $p$-adic numbers, transitivity,
automata 0-1 law}

\begin{abstract}
In the paper, we study behaviour of discrete dynamical systems (automata)
w.r.t. transitivity;
that is, speaking loosely, we consider 
how diverse may be behaviour of the system w.r.t.  variety
of  word transformations  
performed by the system: We call a system completely transitive if, given
arbitrary pair $a,b$ of finite words that have equal lengths, the system $\mathfrak A$, while evolution during (discrete) time,
at a certain moment   transforms $a$ into $b$.
To every system  $\mathfrak A$, we put into a correspondence a family $\Cal
F_{\mathfrak
A}$ of continuous mappings of a suitable non-Archimedean metric space and show that the
system is completely transitive if and only if the family $\Cal
F_{\mathfrak
A}$ is ergodic w.r.t. the Haar measure; then we find easy-to-verify
conditions the
system must satisfy to be completely transitive. The theory can be applied
to analyse behaviour of straight-line computer programs (in particular, 
pseudo-random
number generators that are used in cryptography and simulations) since basic CPU instructions (both numerical and logical) can be considered as continuous mappings of a (non-Archimedean)
metric space $\Z_2$ of 2-adic integers.
\end{abstract}

\maketitle

\section{Introduction}
According to the  most general definition of a system (see e.g. \cite{KaFaArbib}),
by a discrete system (further --- a system) we understand a stationary dynamical system with a
discrete time $\N_0=\{0,1,2,\ldots\}$; that is, a 5-tuple $\mathfrak A=\langle\Cal I,\Cal S,\Cal O,S,O\rangle$
where $\mathcal I$ is a non-empty finite set, the \emph{input alphabet}; $\Cal O$  is a non-empty finite set, the \emph{output alphabet}; $\Cal S$ is a non-empty (possibly, infinite) set of \emph{states}; 
$S\colon\Cal I\times\Cal S\to \Cal S$ is  a \emph{state transition function}; $O\colon\Cal I\times\Cal S\to \Cal O$ is an \emph{output function}. 
Note that in literature systems are also called (synchronous) automata; however, in order to avoid misunderstanding,
in the paper only \emph{initial} automata are so referred.
Remind that the \emph{initial automaton} $\mathfrak A(s_0)=\langle\Cal I,\Cal S,\Cal O,S,O, s_0\rangle$ is a discrete system
$\mathfrak A$ where one state $s_0\in\Cal S$ is fixed; $s_0$ is called the \emph{initial
state}. At the moment $n=0$ the system  $\mathfrak A(s_0)$ is at the state
$s_0$; once feeded by the input symbol $\chi_0\in\Cal I$, the system outputs the
symbol $\xi_0=O(\chi_0,s_o)\in\Cal O$ and reaches the state
$s_1=S(\chi_0,s_0)\in\Cal S$; then the system is feeded by the next input symbol
$\chi_1\in\Cal I$ and repeats the routine. We
stress that the definition of  the automaton $\mathfrak A(s_0)$ is nearly the same as the one of
\emph{Mealy automaton} (see e.g. \cite{Bra,VHDL}) (or of a `letter' 
\emph{transducer}, see e.g.
\cite{Allouche-Shall,Grigorch_auto2_eng}), with the only important difference: the automata
$\mathfrak A(s_0)$
we consider in the paper are 
 \emph{not necessarily finite}; that is, the set of states
$\Cal S$ of $\mathfrak A(s_0)$ may be infinite.
Furthermore, throughout the paper  we assume that there exists a state
$s_0\in \Cal S$ such that all the states of the system $\mathfrak A$  are \emph{reachable}
from $s_0$; 
that is, given $s\in\Cal S$, there exists
input word $w$ over alphabet $\Cal I$ such that after the word $w$ has been
feeded to the automaton $\mathfrak A(s_0)$, the automaton reaches the state $s$. To  
 the system
$\mathfrak A$ we put into a correspondence the family $\Cal F(\mathfrak
A)$ of all automata $\mathfrak
A(s)=\langle\Cal I,\Cal S,\Cal O,S,O, s\rangle$, $s\in\Cal S$.  
For better
exposition, 
throughout the paper we assume that both alphabets $\Cal I$ and $\Cal O$
are $p$-element alphabets with $p$  prime:  
$\Cal I=\Cal O=\{0,1,\ldots,p-1\}=\F_p$; so further 
the word `automaton' stands for initial automaton with input/output alphabets
$\F_p$. A typical example of an automaton of that sort is 
the  \emph{2-adic adding machine}
$\mathfrak O(1)=\langle\F_2,\F_2,\F_2,S,O,1\rangle$, where
$S(\chi,s)\equiv \chi s\pmod
2$,  
$O(\chi,s)\equiv\chi+s\pmod
2$ for $s\in\Cal S=\F_2$, $\chi\in\Cal I=\F_2$.

Automata often are represented by \emph{Moore diagrams}.
Moore  diagram of the automaton $\mathfrak A(s_0)=\langle\Cal I,\Cal S,\Cal O,S,O,s_0\rangle$
is
a directed labeled graph whose vertices are identified with the states of the automaton and for every $s\in\Cal S$ and $r\in\Cal I$ the diagram has an arrow from $s$ to $S(r,s)$ labeled by the pair $(r,O(r,s))$. Figure
\ref{fig:Moore} is an
example of Moore diagram.
\begin{figure}[h]
\begin{quote}\psset{unit=0.5cm}
 \begin{pspicture}(0,1.5)(24,7)
\pnode(8.7,3.8){S1-o}
\pnode(8.6,4.3){S1-i}
\pnode(15.4,4.4){S0-o}
\pnode(14.8,4.5){S0-i}
\pnode(15.4,3.6){S00-o}
\pnode(14.8,3.5){S00-i}
\nccurve[angleA=150,angleB=90,ncurv=-12]{<-}{S00-o}{S00-i}
\nccurve[angleA=210,angleB=270,ncurv=-12]{<-}{S0-o}{S0-i}
\nccurve[angleA=210,angleB=150,ncurv=15]{->}{S1-o}{S1-i}
\pscircle[fillstyle=solid,fillcolor=yellow,linewidth=1pt](9,4){.5}
\pscircle[fillstyle=solid,fillcolor=yellow,linewidth=1pt](15,4){.5}
  \psline{->}(9.5,4)(14.5,4)
 \uput{1}[90](9,2.8){$1$}
  \uput{1}[90](15,2.8){$0$}
  \uput{1}[90](12,3.3){$(0,1)$}
  \uput{1}[90](17.5,.3){$(1,1)$}
  \uput{1}[90](17.5,5.3){$(0,0)$}
  \uput{1}[90](5.2,2.6){$(1,0)$}
\end{pspicture}
\end{quote}
\caption{Moore diagram of the 2-adic adding machine.}
\label{fig:Moore}
\end{figure}
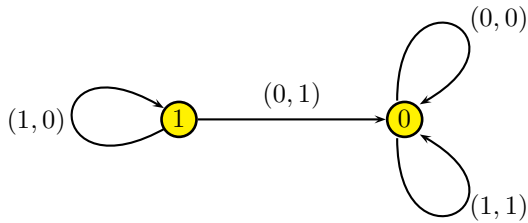

Given an automaton $\mathfrak
A(s)=\langle\F_p,\Cal S,\F_p,S,O, s\rangle\in\Cal F(\mathfrak A)$, the
automaton transforms input words of length $n$ into output words of length $n$; that is, $\mathfrak
A(s)$ maps the set $W_n$ of all words of length $n$ into $W_n$; we denote
 corresponding mapping via $f_{n,\mathfrak A(s)}$. It is clear now that behaviour of the system $\mathfrak
A$ can be described in terms of the mappings $f_{n,\mathfrak A(s)}$ for all $s\in\Cal S$
and all $n\in\N=\{1,2,3,\ldots\}$. As all states of the system $\mathfrak A$ are
reachable from the state $s_0$, it suffices to study 
only the mappings $f_{n,\mathfrak A(s_0)}$ for all $n\in\N$.
Now we remind the notion of transitivity:
\begin{defn}[Transitivity of a family of mappings]
\label{def:trans}
A \emph{family $\Cal F$ of mappings} of a finite non-empty set $M$ into $M$ is called \emph{transitive}
whenever given a pair $(a,b)\in M\times M$, there exists $f\in\Cal F$ such that $f(a)=b$.
\end{defn} 
It is clear that once $M$ contains
more than one element, a family that consists of a single mapping $f\:M\>M$ cannot
be transitive in the meaning of Definition \ref{def:trans}; that is why the transitivity of a single mapping is defined as follows:
\begin{defn}[Transitivity of a single mapping]
\label{def:trans-1}
A \emph{mapping} $f\:M\>M$, where $M$ is a finite non-empty set, is called \emph{transitive} if it $f$ cyclically  permutes elements of $M$.
\end{defn}
In other words,  a single mapping $f\:M\>M$ is transitive if and only if
the family $\{e,f, f^{2}=f\ast
f, f^3=f\ast f\ast f,\ldots\}$
is transitive in the meaning of the Definition \ref {def:trans} (here $e$ stands for identity
mapping, $\ast$ for composition of mappings). Note that a transitive mapping
is necessarily bijective but generally not vice versa.

Now we introduce the main notions of the paper.
\begin{defn}[Automata transitivity]
\label{def:auto-trans}
The automaton $\mathfrak A(s_0)$
(equivalently, the system $\mathfrak A$) is said to be
\begin{itemize}
\item \emph{$n$-word transitive}, if the mapping $f_{n,\mathfrak A(s_0)}$ is transitive 
on the set $W_n$ of all words of length $n$;
\item\emph{word transitive}, if $\mathfrak A(s_0)$ is $n$-word transitive for all $n\in\N=\{1,2,3,\ldots\}$;
\item\emph{completely transitive}, if  for every $n\in\N$, the family $f_{n,{\mathfrak A(s)}}$,
$s\in \Cal S$, is transitive on $W_n$;
\item\emph{absolutely transitive}, if for every $s\in \Cal S$ the automaton ${\mathfrak A}(s)$ is completely transitive; that is, if for every $n\in\N$
the family $f_{n,{\mathfrak A(t)}}$,
$t\in \Cal S_{\mathfrak A(s)}$, is transitive on $W_n$, where $\Cal S_{\mathfrak A(s)}$ is the set
of all reachable states of the automaton $\mathfrak A(s)$.
\end{itemize}
\end{defn}
The transitivity properties may be defined in equivalent way, in terms of
words; this way is more common in automata theory. We remand some notions related
to words beforehand.

Given a non-empty alphabet $\Cal A$, its elements are called \emph{symbols},
or \emph{letters}. By the definition, a \emph{word of length $n$ over alphabet $\Cal A$}\index{word
(over an alphabet)}\index{word
(over an alphabet)!-- of length $n$} is a finite string (stretching from
right to left)
$\alpha_{n-1}\cdots\alpha_1\alpha_0$, where $\alpha_{n-1},\ldots,\alpha_1,\alpha_0\in\Cal
A$. The \emph{empty word} is a sequence
of length 0, that is, the one that contains no symbols. Hereinafter the length
of the word $w$ is denoted via $\Lambda(w)$. Given a word 
$w=\alpha_{n-1}\cdots\alpha_1\alpha_0$,
any word $v=\alpha_{k-1}\cdots\alpha_1\alpha_0$, $n\ge k\ge 1$, is called a 
\emph{prefix} (or, an \emph{initial subword})
of the word $w$, any word $u=\alpha_{n-1}\cdots\alpha_{i+1}\alpha_i$,
$0\le i\le n-1$ is called a \emph{suffix} of the word $w$, and any word 
$\alpha_{k}\cdots\alpha_{i+1}\alpha_i$, $n-1\ge k\ge i\ge 0$, is called
a \emph{subword} of the word $w$.    Given words $a=\alpha_{n-1}\cdots\alpha_1\alpha_0$
and $b=\beta_{k-1}\cdots\beta_1\beta_0$, the
\emph{concatenation} $a\circ b$ is the following word (of length $n+k$):
\[
a\circ b=\alpha_{n-1}\cdots\alpha_1\alpha_0\beta_{k-1}\cdots\beta_1\beta_0.
\]  
\begin{defn}[Automata transitivity, equivalent]
\label{def:auto-trans-e}
~
\begin{enumerate} 
\item The \emph{word transitivity} means that given two finite words $w$, $w^\prime$ whose lengths are equal one to another,
$\Lambda(w)=\Lambda(w^\prime)=n$, the word $w$ can be transformed into $w^\prime$ by a sequential composition
of a sufficient number of copies of $\mathfrak A(s_0)$:
\begin{figure}[h]
\begin{quote}\psset{unit=0.5cm}
 \begin{pspicture}(0,10)(24,12)
  \psline[linecolor=black]{<-}(20,11)(18,11)
  \psline[linecolor=black]{<-}(6,11)(4,11)
  \psline[linecolor=black](4,11.3)(4,10.7)
  
  \psframe[fillstyle=solid,fillcolor=yellow,linecolor=black,linewidth=2pt](6,10)(10,12)
  \psframe[fillstyle=solid,fillcolor=yellow,linecolor=black,linewidth=2pt](14,10)(18,12)
  
  \uput{0}[90](8,10.7){$\mathfrak A$}
  \uput{0}[90](16,10.7){$\mathfrak A$}
  
  \uput{0}[90](5,9){$\underbrace{}_{ w}$}
  \uput{0}[90](19,9){$\underbrace{}_{ w^\prime}$}
  \uput{0}[90](12,10.7){$\cdots\cdots\cdots$}
  
 \end{pspicture}
\end{quote}

\end{figure}

\item The \emph{complete transitivity} means that given finite words $w$, $w^\prime$ such that
$\Lambda(w)=\Lambda(w^\prime)$, there exists a finite word  $y$ (may be of length other than that
of $w$ and $w^\prime$) such that the automaton $\mathfrak A(s_0)$  transforms the input word $w\circ y$ (with the prefix $y$) 
to the output word $w^\prime \circ y^\prime$ that has a suffix $w^\prime$:
\begin{figure}[h]

\begin{quote}\psset{unit=0.5cm}
 \begin{pspicture}(0,10)(24,12)

  \psline[linecolor=black]{<-}(20,11)(14,11)
  \psline[linecolor=black]{<-}(10,11)(4,11)
  
  \psline[linecolor=black](6,11.3)(6,10.7)
  \psline[linecolor=black](4,11.3)(4,10.7)
  \psline[linecolor=black](16,11.3)(16,10.7)
  
  \psframe[fillstyle=solid,fillcolor=yellow,linecolor=black,linewidth=2pt](10,10)(14,12)
  
  \uput{0}[90](12,10.7){$\mathfrak A$}
  
  \uput{0}[90](8,9){$\underbrace{\ast\cdots\cdots\cdots\ast}_y$}
  \uput{0}[90](5,9){$\underbrace{}_{ w}$}
  \uput{0}[90](15,9){$\underbrace{}_{ w^\prime}$}
  
 \end{pspicture}
\end{quote}

\end{figure} 
\item The \emph{absolute transitivity} means that given finite words $x$, $w$, $w^\prime$ such that
$\Lambda(w)=\Lambda(w^\prime)$ (may be $\Lambda(x)\ne\Lambda(w)$), there exists a finite word $y$ such that the automaton $\mathfrak A(s_0)$  transforms the input word $w\circ
y\circ x$ to the output word $w^\prime\circ y^\prime\circ x^\prime$:
\begin{figure}[h]

\begin{quote}\psset{unit=0.5cm}
 \begin{pspicture}(0,10)(24,12)

  \psline[linecolor=black]{<-}(20,11)(14,11)
  \psline[linecolor=black]{<-}(10,11)(4,11)
  \psline[linecolor=black](8,11.3)(8,10.7)
  \psline[linecolor=black](6,11.3)(6,10.7)
  \psline[linecolor=black](4,11.3)(4,10.7)
  \psline[linecolor=black](16,11.3)(16,10.7)

  \psframe[fillstyle=solid,fillcolor=yellow,linecolor=black,linewidth=2pt](10,10)(14,12)
  \uput{0}[90](12,10.7){$\mathfrak A$}
  \uput{0}[90](9,9){$\underbrace{}_{ x}$}
  \uput{0}[90](7,9){$\underbrace{\ast\cdots\ast}_y$}
  \uput{0}[90](5,9){$\underbrace{}_{ w}$}
  \uput{0}[90](15,9){$\underbrace{}_{ w^\prime}$}

 \end{pspicture}
\end{quote}

\end{figure}

\end{enumerate}
\end{defn}
\begin{exmp}[Word transitive automaton]
\label{exm:odo}
The 2-adic adding machine $\mathfrak O(1)$, which was introduced above, is word transitive:
 It is clear that if one treats an $n$-bit word
as a base-2 expansion of a non-negative integer $w$ then 
$f_{n,\mathfrak O(1)}(w)\equiv w+1\pmod{2^n}$,
$n=1,2,3,\ldots$; therefore $f_{n,\mathfrak O(1)}^i(w)\equiv w+i\pmod{2^n}$
for all $i\in\N_0=\{0,1,2,\ldots\}$ which means that
$f$ is transitive on the set $W_n$ of all $n$-bit words, cf. Definition \ref{def:auto-trans},
Definition \ref{def:trans-1} and Definition \ref{def:auto-trans-e}(i).
\end{exmp}
Note that  the 2-adic adding machine $\mathfrak O(1)$ is not completely transitive as  given $n\in\N=\{1,2,3,\ldots\}$,
the corresponding
family consists of the following two mappings: $f_{n,\mathfrak O(1)}(w)\equiv
w+1\pmod{2^n}$ and $f_{n,\mathfrak O(0)}(w)\equiv
w\pmod{2^n}$; so none of the mappings maps the two-bit word 00 (which is
a  base-2 expansion of 0) to the two-bit word 10 (which is a base-2 expansion
of 2).
\begin{exmp}[Absolutely transitive automaton]
\label{exm:const}
Let $(\alpha_i)_{i=0}^\infty=\alpha_0,\alpha_1,\ldots$ be an infinite binary sequence such that every
binary pattern $\beta_1\cdots\beta_n$ occurs in the sequence $(\alpha_i)_{i=0}^\infty$
(whence, occurs infinitely many times);
that is, given $n\in\N=\{1,2,\ldots\}$ and $\beta_1,\ldots,\beta_n\in\F_2$,
the following equalities $\alpha_i=\beta_1,\alpha_{i+1}=\beta_2,\ldots,\alpha_{i+n-1}=\beta_n$ hold simultaneously
for some (equivalently, for infinitely many) $i\in\N_0=\{0,1,2,\ldots\}$. Then the following automaton
$\mathfrak  C(0)$ is absolutely transitive:
$\mathfrak C(0)=\langle\F_2,\N_0,\F_2,S,O,0\rangle$, where $S(\chi,s)=s+1$,
$O(\chi,s)=\alpha_s$ for $s\in\Cal S=\N_0$, $\chi\in\Cal I=\F_2$.

Indeed, given an $n$-bit word $w$, we see that $f_{n,\mathfrak C(s)}(w)=\alpha_{s+n-1}\cdots\alpha_s$
for every $s\in\Cal S=\N_0$ which by Definition \ref{def:auto-trans-e}(iii) (or, equivalently, by Definition
\ref{def:auto-trans})
implies absolute transitivity
of the automaton $\mathfrak C(0)$ due to the choice of the sequence $(\alpha_i)_{i=0}^\infty$.
\end{exmp}
Note also that the automaton $\mathfrak C(0)$ is $n$-word transitive for no
$n\in\N=\{1,2,3,\ldots\}$ as $f_{n,\mathfrak C(0)}$ is not bijective on $W_n$, cf. Definitions \ref{def:trans-1} and \ref{def:auto-trans}. 

The goal of the paper is to present techniques to determine whether a  system $\mathfrak A$ is word transitive, or completely
transitive, or absolutely transitive. For this purpose, we study how the automaton $\mathfrak A(s_0)$ acts on
\emph{infinite} words over alphabet $\F_p$. The latter words are considered
as $p$-adic integers, and the corresponding transformation turns out to be
a continuous transformation on the space of $p$-adic integers $\Z_p$. We remind  main notions of $p$-adic
analysis in the next section where we describe our approach, first formally
and then less formally.

We note that the $p$-adic approach (and wider the non-Archimedean one) has already been successfully applied to automata
theory. Seemingly the paper  \cite{Lunts} is the first one where  the $p$-adic techniques is applied to
study automata functions; the paper  deals with linearity
conditions of automata mappings. For application of the non-Archimedean methods to automata
and formal
languages  see expository paper \cite{Pin_p-adic_auto} and references therein;
for applications to automata and group theory see \cite{Grigorch_auto,Grigorch_auto2_eng}.
In \cite{Vuillem_circ,Vuillem_DigNum,Vuillem_fin} the 2-adic methods are
used to study binary automata, in particular, to obtain the finiteness criterion for these automata. In monograph \cite{AnKhr} the $p$-adic ergodic theory
is studied (see numerous references therein) aiming at applications to computer
science  and cryptography (in particular, to automata theory, to pseudorandom
number generation and to stream cipher design) as well as to applications
in other areas like quantum theory, cognitive sciences and genetics.

As for mathematical techniques used in the paper, these are somewhat complex: to study ergodic
properties of families 
of automata functions related to a given discrete system, we combine
$p$-adic methods, methods of real analysis and methods from automata theory.
 
The paper is organized as follows:
\begin{itemize}
\item In Section \ref{sec:p-adic} we remind basic notions of $p$-adic analysis
and show that automata functions (the transformations of infinite words performed
by automata) are continuous (actually, 1-Lipschitz) functions
w.r.t. the $p$-adic metric. In particular, we mention that basic computer instructions, both arithmetic (like addition, subtraction and multiplication of
integers) and bitwise logical (like bitwise conjunction, disjunction, negation
and exclusive `or'), as well as some other (like shifts towards higher order bits and
masking) are continuous w.r.t. 2-adic metric. 
\item In Section \ref{sec:p-erg} we remind basics of the $p$-adic ergodic
theory in connection to automata functions.
\item Section \ref{sec:p-real} contains main results of the paper: By plotting
an automaton function into  real unit square we establish the automata 0-1 law
and find sufficient conditions for a system to be completely transitive or
absolutely transitive.
\item We conclude in Section \ref{sec:Concl}.
\end{itemize}  
\section{The $p$-adic representation of automata functions}
\label{sec:p-adic}
Every (left-)infinite word $\ldots\chi_2\chi_1\chi_0$ over the alphabet $\F_p$
can be associated to the $p$-adic integer $\chi_0+\chi_1p+\chi_2p^2+\cdots$
which is an element of the ring  $\Z_p$ of $p$-adic integers; the ring $\Z_p$
is a  complete  compact metric space w.r.t. $p$-adic metric (we remind the notion
below). The automaton $\mathfrak A(s_0)$ maps infinite words
to infinite words. Denote the corresponding  mapping via $f=f_{\mathfrak A(s_0)}$;
then $f$ is a function defined on  $\Z_p$ and valuated
in $\Z_p$. The function $f=f_{\mathfrak A(s_0)}\:\Z_p\>\Z_p$ is called the
\emph{automaton function} of the automaton $\mathfrak A(s_0)$. For instance,
the automaton function $f_{\mathfrak O(1)}$ of the 2-adic adding machine $\mathfrak O(1)$
is the \emph{2-adic odometer}, the transformation $f_{\mathfrak O(1)}(x)=x+1$ of the ring $\Z_2$ of 2-adic integers; whereas the automaton function $f_{\mathfrak C(0)}$
of the automaton
$\mathfrak C(0)$ from Example \ref{exm:const} is a constant function on $\Z_2$: $f_{\mathfrak C(0)}(x)=
\sum_{i=0}^\infty\alpha_i2^i\in\Z_2$.

Due to the fact that at every moment $n=0,1,2,\ldots$, the $n$-th output symbol
may depend only on the input symbols $\chi_0,\chi_1,\ldots,\chi_n$ that have been
feeded to the automaton at
the moments $0,1,\ldots,n$ respectively, the \emph{automaton function 
is a $p$-adic 1-Lipschitz function}; that is, $f$ satisfies the $p$-adic Lipschitz condition with the constant 1 w.r.t. $p$-adic metric and thus $f$
is a $p$-adic continuous function.
Vice versa, given a 1-Lipschitz function $f\:\Z_p\>\Z_p$, there exists an
automaton $\mathfrak A(s_0)$ such that $f=f_{\mathfrak A(s_0)}$, see further
Theorem
\ref{thm:auto-1L}. Therefore to
study the behavior of the system $\mathfrak  A$ we may (and will) study corresponding
automata functions rather than automata themselves; and
to study the behaviour of the latter functions 
we may apply techniques from
$p$-adic analysis and $p$-adic dynamics, see \cite{AnKhr}. This is the key point of our approach.

We remind that the space $\Z_p$ is the completion of the ring $\Z=\{0,\pm1,\pm
2,\ldots\}$ of (rational) integers w.r.t. the $p$-adic metric $d_p$ which is defined
as follows: given $a,b\in\Z$, if $a\ne b$ then denote $p^{\ord_p(a-b)}$  the largest
power of $p$ that divides $a-b$ and put $d_p(a,b)=\|a-b\|_p=p^{-\ord_p(a-b)}$, put $\|a-b\|_p=0$
if $a=b$. The $p$-adic metric violates the Archimedean Axiom and thus is called a non-Archimedean metric (or, an ultrametric). Now we describe our
approach less formally. 

Multiplication and addition of infinite words over alphabet $\F_p$ can be defined via
school-textbook-like
algorithms for multiplication/addition of integers represented by  base-$p$
expansions. For instance, in case of 2-adic integers (i.e., when $p=2$) the
following example shows that   $-1=\ldots 11111$ in $\Z_2$ (as
$\ldots0001=1$):

{\footnotesize
\begin{align*}
&\mbox{}&{}&\ldots 1&{}& 1&{}&1&{}&1&{}&\\
&\mbox{$+$}&{}&{}\\
&\mbox{}&{}& \ldots 0&{}&0&{}&0&{}&1&{}&\\
\intertext{\hbox to 3.5cm{}\hbox to 7cm{\hrulefill}}
&\mbox{}&{}&\ldots 0&{}&0&{}&0&{}&0&{}&
\end{align*} 
}

The next example shows that $\ldots1010101=-\frac{1}{3}$ in $\Z_2$ (as
$\ldots00011=3$):

{\footnotesize
\begin{align*}
&\mbox{}&{}&\ldots 0&{}&1&{}& 0&{}&1&{}&0&{}&1\\
&\mbox{$\times$}&{}&{}\\
&\mbox{}&{}& \ldots 0&{}&0&{}&0&{}&0&{}&1&{}&1\\
\intertext{\hbox to 3.2cm{}\hbox to 9.3cm{\hrulefill}}
&\mbox{}&{}&\ldots 0&{}&1&{}& 0&{}&1&{}&0&{}&1\\
&\mbox{$+$}&{}&{}\\
&\mbox{}&{}& \ldots 1&{}&0&{}&1&{}&0&{}&1&{}&\\
\intertext{\hbox to 3.2cm{}\hbox to 9.3cm{\hrulefill}}
&\mbox{}&{}&\ldots 1&{}&1&{}&1&{}&1&{}&1&{}&1
\end{align*}
}

The set of all infinite words over the alphabet $\F_p$ with so
defined operations (and distance) constitutes the ring (and the metric space) $\Z_p$. Note that $\Z_p$ contains the ring of all (rational) integers $\Z$
as well as some other elements from the field $\Q$ of rational numbers;
so $\Z_p\cap\Q\varsupsetneqq\Z$. For instance, in $\Z_2$ the
sequences that contain only finite number of 1-s correspond  to non-negative
rational integers represented by their base-2 expansions (e.g., $\ldots 00011=3$);
the sequences that contain only finite number of 0-s correspond  to negative
rational integers (e.g., $\ldots 111100=-4$);
the sequences that are (eventually) periodic correspond to   rational
numbers that can be represented by irreducible fractions with odd denominators
(e.g., $\ldots1010101=-\frac{1}{3}$); and
non-periodic sequences correspond to no rational numbers. It is also worth noticing that when $p=2$, the 2-adic integers
representing negative rational integers may be regarded as \emph{2's complements}
of the latter (cf. e.g. \cite{VHDL,Knuth}). In computer science, 2-adic representations
of rational integers are also known as \emph{Hensel codes}, cf. \cite{Knuth}, after the name of German mathematician Kurt
Hensel who discovered $p$-adic numbers more than a century ago.

By the definition, given two infinite words $\ldots\chi_2\chi_1\chi_0$
and $\ldots\xi_2\xi_1\xi_0$ over the alphabet $\F_p$,  the
distance $d_p$ between these words  is $p^{-n}$, where $n=\min\{i=0,1,2,\ldots
:
\chi_i\ne\xi_i\}$, and  the distance is 0 if no such $n$ exists. For instance, in the case  $p=2$ we have that
\begin{equation*}
\left.
\begin{aligned}
\ldots1010\underline{1}{0101}&={\footnotesize{-\frac{1}{3}}}\\
\ldots0000\underline{0}{0101}&=5\\
\end{aligned}
\right\}
\Rightarrow d_2\left(-\frac{1}{3},5\right)=\left\|\left(-\frac{1}{3}\right)-5\right\|_2=\frac{1}{2^4}=\frac{1}{16}.
\end{equation*}
In other words,
$-\frac{1}{3}\equiv 5\pmod{16}; -\frac{1}{3}\not\equiv 5\pmod{32}$. Note
that actually 
$\md p^k$, the \emph{reduction modulo $p^k$}, is an epimorphism of $\Z_p$ onto
the residue ring $\Z/p^k\Z$ modulo $p^k$ (we associate elements of the latter
ring
to $0,1,\ldots, p^k-1$):
\begin{equation}
\label{eq:md}
\md p^k\colon\Z_p\>\Z/p^k\Z;\ \left(\sum_{i=0}^\infty\alpha_ip^i\right)
\md p^k=\sum_{i=0}^{k-1}\alpha_ip^i,
\end{equation}
where $\alpha_i\in\F_p$. Thus, for $a,b\in\Z_p$, the following equivalences
hold:
\begin{equation}
\label{eq:md=ineq}
\|a-b\|_p\le p^{-k} \ \text{if and only if} \ a\md p^k=b\md p^k; \ \text{that
is, if and only if}\ a\equiv b\pmod{p^k}.
\end{equation}
Due to equivalence \eqref{eq:md=ineq}, one may use congruences between $p$-adic
numbers rather than inequalities for $p$-adic absolute values which is sometimes
more convenient during proofs. The advantage of using congruences rather
than inequalities in $p$-adic analysis over $\Z_p$ is that one may work with congruences by applying standard
number-theoretic techniques; e.g., add or multiply congruences sidewise,
etc. More about this in \cite{AnKhr}.

Metrics on  Cartesian powers $\Z_p^n$ can be  defined in a 
manner similar to that of the case $n=1$:
$$
\|(a_1,\ldots,a_n)-(b_1,\ldots,b_n)\|_p=\max\{\|a_j-b_j\|_p\colon j=1,2,\ldots,n\}
$$
for every $(a_1,\ldots,a_n),(b_1,\ldots,b_n)\in\Z_p^n$;
so $p$-adic continuous  multi-variate functions defined on and valuated
$\Z_p$ can be considered as well. 

Once the metric  is defined, one can speak of limits, of continuous functions,
of derivatives. of convergent series, etc.; that is, of $p$-adic Calculus.
We refer to the
numerous books on $p$-adic analysis (e.g., \cite{Gouvea:1997,Kat,Kobl,Sch} ) for further details.

An important example of continuous 2-adic functions are basic computer instructions,
both arithmetic (addition, multiplication, subtraction) and bitwise logical
($\AND$, the bitwise conjunction; $\OR$, the bitwise disjunction; $\XOR$,
the bitwise exclusive `or'; $\NOT$, the bitwise negation) and some others
(shifts towards higher order bits, masking). All these instructions can be
regarded as (univariate or two-variate) 1-Lipschitz functions defined on and valuated in the space of
2-adic integers $\Z_2$, \cite{AnKhr}. That is why the theory we develop finds numerous
applications
in computer science and cryptology: the straight-line
programs (and more complicated ones) combined from the mentioned instructions can also be regarded as continuous 2-adic mappings; so behaviour of these
programs can be analysed by techniques of the non-Archimedean dynamics,
see e.g. \cite{me-NATO,me-CJ,AnKhr,me:1,me:ex,me:2}. It is worth noticing here that similar approaches
work effectively also in genetics, cognitive sciences, image processing, quantum theory,
etc., see comprehensive monograph \cite{AnKhr} and references therein.

Concluding the section, we now give a formal proof that the class of all
automata functions $f_{\mathfrak A(s_0)}$ of automata of the form $\mathfrak A(s_0)=\langle\F_p,\Cal
S,\F_p,S,O,s_0\rangle$ coincides with the class of all 1-Lipschitz functions
$f\:\Z_p\>\Z_p$. The result is not new: It can be derived from a 
general result on asynchronous automata \cite[Theorem 2.4, Proposition 3.7]{Grigorch_auto};
in a special case $p=2$ the result was proved in  \cite{Vuillem_circ}. We use an opportunity to give a direct `$p$-adic' proof here  as we consider only synchronous
automata, and for arbitrary $p$.
\begin{thm}[Automata functions are 1-Lipschitz functions and vice versa]
\label{thm:auto-1L}
The automaton function $f_{\mathfrak A(s_0)}\:\Z_p\>\Z_p$ of the automaton
$\mathfrak A(s_0)=\langle\F_p,\Cal S,\F_p,S,O,s_0\rangle$ is 1-Lipschitz.

Conversely, for every 1-Lipschitz function $f\:\Z_p\>\Z_p$ there exists an
automaton 
 $\mathfrak A(s_0)=\langle\F_p,\Cal S,\F_p,S,O,s_0\rangle$ such that  $f=f_{\mathfrak A(s_0)}$.
\end{thm}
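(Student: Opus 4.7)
The plan is to exploit the congruence reformulation \eqref{eq:md=ineq} of the $p$-adic metric, which turns the 1-Lipschitz property into the digitwise statement ``$x\equiv y\pmod{p^k}\Rightarrow f(x)\equiv f(y)\pmod{p^k}$''. For the first (automaton $\Rightarrow$ 1-Lipschitz) direction, I would fix $x=\sum_{i\ge 0}\chi_ip^i$ and $y=\sum_{i\ge 0}\chi'_ip^i$ in $\Z_p$ with $\|x-y\|_p\le p^{-k}$, so that $\chi_i=\chi'_i$ for $i<k$. Feeding both sequences into $\mathfrak A(s_0)$, I would prove by induction on $n$ that the state $s_n$ reached after processing the first $n$ input symbols depends only on $s_0$ and $(\chi_0,\ldots,\chi_{n-1})$; hence, via the output rule $\xi_n=O(\chi_n,s_n)$, the $n$-th output symbol depends only on $s_0$ and $(\chi_0,\ldots,\chi_n)$. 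For $n\le k-1$ this forces $\xi_n=\xi'_n$, i.e.\ the first $k$ digits of $f_{\mathfrak A(s_0)}(x)$ and $f_{\mathfrak A(s_0)}(y)$ coincide, and applying \eqref{eq:md=ineq} once more yields $\|f_{\mathfrak A(s_0)}(x)-f_{\mathfrak A(s_0)}(y)\|_p\le p^{-k}$.

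For the converse, given a 1-Lipschitz $f\:\Z_p\>\Z_p$, I would realise $f$ as an automaton function by taking as states the entire input history seen so far. Concretely, set $\Cal S=\bigcup_{k=0}^\infty \F_p^k$ (with $\F_p^0=\{\varepsilon\}$, the empty word), choose $s_0=\varepsilon$, and for a state $w=(\chi_0,\ldots,\chi_{k-1})$ of length $k$ define $S(\chi,w)=(\chi_0,\ldots,\chi_{k-1},\chi)$. The delicate piece is the output rule: I would set $O(\chi,w)$ equal to the $k$-th $p$-adic digit of $f(z)$ for any $z\in\Z_p$ satisfying $z\equiv\sum_{i=0}^{k-1}\chi_ip^i+\chi p^k\pmod{p^{k+1}}$. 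The 1-Lipschitz hypothesis is used precisely here, and nowhere else: two admissible $z,z'$ satisfy $z\equiv z'\pmod{p^{k+1}}$, hence by the Lipschitz property together with \eqref{eq:md=ineq} also $f(z)\equiv f(z')\pmod{p^{k+1}}$, so their $k$-th digits agree and $O(\chi,w)$ is unambiguous. A short induction then shows that on input $x=\sum_{i\ge 0}\chi_ip^i$ the automaton visits the states $\varepsilon,(\chi_0),(\chi_0,\chi_1),\ldots$ in succession and emits, at time $n$, precisely the $n$-th digit of $f(x)$; hence $f_{\mathfrak A(s_0)}=f$, and every state is visibly reachable from $s_0$.

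I do not foresee any substantial obstacle. The whole argument is bookkeeping around the digitwise reformulation \eqref{eq:md=ineq}, and the single step that genuinely invokes the 1-Lipschitz assumption is the well-definedness of $O$ in the construction above. The state space produced is countably infinite, but this is permissible since the theorem imposes no finiteness condition on $\Cal S$; an optional refinement would be to quotient histories by the equivalence ``$w\sim w'$ if they induce identical future behaviour of $f$'', yielding a more economical (possibly finite) automaton, though this is not needed for the stated equivalence.
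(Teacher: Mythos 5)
Your proof is correct and follows essentially the same route as the paper: the forward direction rests on the observation that the $n$-th output digit depends only on the first $n+1$ input digits (the paper packages this as the triangular representation \eqref{eq:tri} and cites \cite[Proposition 3.35]{AnKhr}, whereas you verify the Lipschitz inequality directly), and your converse construction --- states are input histories, transitions append a letter, the output is the appropriate digit of $f$, with well-definedness guaranteed by 1-Lipschitzness --- is exactly the paper's automaton, which merely encodes the finite words by natural numbers via the radix-order bijection $\nu$.
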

\begin{proof}
Given a $p$-adic integer $z\in\Z_p$, denote via $\delta_i(z)\in\F_p$, the
$i$-th `$p$-adic digit' of $z$; that is,  the $i$-th term coefficient
in the $p$-adic representation of 
$
z=\sum_{i=0}^\infty\delta_i(z)p^i.
$
As $s_i=S(\delta_{i-1}(z),s_{i-1})$ for every $i=1,2\ldots$,  the $i$-th output symbol
$\xi_i=\delta_i(f_{\mathfrak A}(z))$ depends only on input symbols
$\chi_0,\chi_1,\ldots,\chi_i$; that is 
\[
\delta_i(f_{\mathfrak A}(z))=\psi_i(\delta_0(z),\delta_1(z),\ldots,\delta_i(z))
\]
for all $i=0,1,2,\ldots$ and for suitable mappings $\psi_i\:\F_p^{i+1}\>\F_p$.
That is,
$f=f_{\mathfrak A(s_0)}\:\Z_p\>\Z_p$ is of the form
\begin{equation}
\label{eq:tri}
f\colon x=\sum_{i=0}^\infty\chi_ip^i\mapsto f(x)=\sum_{i=0}^\infty\psi_i(\chi_0,\ldots,\chi_i)p^i.
\end{equation}
This means that the function $f_{\mathfrak
A(s_0)}$ is 1-Lipschitz by \cite[Proposition 3.35]{AnKhr} as the mentioned proposition in application to the mappings we consider here can be re-stated as follows: A mapping $f\colon\Z_p\>\Z_p$ is 1-Lipschitz
if and only if  $f$ can be represented in the form \eqref{eq:tri}
for suitable mappings $\psi_i\:\F_p^{i+1}\>\F_p$, $i=0,1,2,\ldots$.

Conversely, let $f\:\Z_p\>\Z_p$ be a 1-Lipschitz mapping; then by \cite[Proposition 3.35]{AnKhr}  $f$ can be
represented in the form \eqref{eq:tri} for suitable mappings $\psi_i\:\F_p^{i+1}\>\F_p$, $i=0,1,2,\ldots$.
We now construct an automaton  $\mathfrak A(s_0)=\langle\F_p,\Cal S,\F_p,S,O,s_0\rangle$ such that $f_{\mathfrak A(s_0)}=f$. 

Let  $\F_p^\star$ be a set of all non-empty finite words over the alphabet $\F_p$.
We consider these words  as base-$p$ expansions of numbers from $\N=\{1,2,3,\ldots\}$
and enumerate all these words  by integers $1,2,3,\ldots$ in 
radix order in accordance with
the natural order on $\F_p$, $0<1<2<\cdots<p-1$: 
$$
0<1<2<\ldots<p-1<00<01<02<\ldots<0(p-1)<10<11<12<\ldots;
$$
so that $\nu(0)=1,\nu(1)=2,\nu(2)=3,\ldots,\nu(p-1)=p,\nu(00)=p+1,\nu(01)=p+2,\ldots$.
This way we establish a
one-to-one correspondence between the words $w\in\F_p^\star$ and integers $i\in\N$: $w \leftrightarrow \nu(w)$,  $i\leftrightarrow \omega(i)$ ($\nu(w)\in\N$, $\omega(i)\in\F_p^\star$).
Note that $\nu(\omega(i))=i$, $\omega(\nu(w))=w$ for all $i\in\N$ and all
non-empty words from
$w\in\F_p^\star$.
Define  $\omega(0)$ to be the empty word. 

Now put $\Cal S=\N_0=\{0,1,2,3,\ldots\}$, the set of all states of the automaton  $\mathfrak A(s_0)$ under construction, and take  the initial state  $s_0=0$. The state transition function $S$ is defined as follows: 
\begin{equation}
\label{eq:auto-st}
S(r,i)=\nu(r\circ\omega(i)),
\end{equation}
where $i=0,1,2,\ldots$ and  $r\in\F_p$. That is,
$S(r,i)$ is the number of the word  $r\circ\omega(i)$ which is a concatenation
of the word   $\omega(i)$  (the word whose number is $i$), the prefix, with the single-letter
word  $r$, the suffix. 

Now consider a one-to-one mapping 
$\theta_{n}(\chi_{n-1}\cdots\chi_1\chi_0)=(\chi_{0},\chi_1,\ldots,\chi_{n-1})$ from the $n$-letter words onto $\F_p^n$ and 
define the output function of the automaton $\mathfrak A(0)$ as follows:
\begin{equation}
\label{eq:auto-out}
O(r,i)=\psi_{\Lambda(\omega(i))}(\theta_{\Lambda(\omega(i))+1}(r\circ\omega(i))),
\end{equation}
where $i=0,1,2,\ldots$ and  $r\in\F_p$. Remind that we denote via $\Lambda(w)$
the length of the word $w$. 

The idea of the construction is illustrated by Figure \ref{fig:constr} which depicts Moore diagram of the  automaton
$\mathfrak A(0)$ for the case $p=2$:
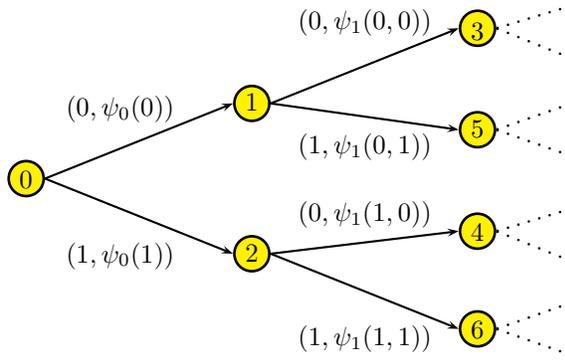
\begin{figure}[h]
\begin{quote}\psset{unit=0.5cm}
 \begin{pspicture}(-1.2,-2)(24,10)
\pscircle[fillstyle=solid,fillcolor=yellow,linewidth=1pt](4,4){.5}
\pscircle[fillstyle=solid,fillcolor=yellow,linewidth=1pt](10,6){.5}
\pscircle[fillstyle=solid,fillcolor=yellow,linewidth=1pt](10,2){.5}
\pscircle[fillstyle=solid,fillcolor=yellow,linewidth=1pt](16,8){.5}
\pscircle[fillstyle=solid,fillcolor=yellow,linewidth=1pt](16,5.3){.5}
\pscircle[fillstyle=solid,fillcolor=yellow,linewidth=1pt](16,2.6){.5}
\pscircle[fillstyle=solid,fillcolor=yellow,linewidth=1pt](16,0){.5}
  \psline{->}(4.5,4)(9.5,6)
  \psline{->}(4.5,4)(9.5,2)
  \psline{->}(10.5,6)(15.5,8)
  \psline{->}(10.5,6)(15.5,5.3)
  \psline{->}(10.5,2)(15.5,2.6)
  \psline{->}(10.5,2)(15.5,0)
  \psline[linestyle=dotted,linewidth=1pt](16.5,0)(18.5,-.7)
  \psline[linestyle=dotted,linewidth=1pt](16.5,0)(18.5,.7)
  \psline[linestyle=dotted,linewidth=1pt](16.5,2.6)(18.5,1.8)
  \psline[linestyle=dotted,linewidth=1pt](16.5,2.6)(18.5,3.2)
  \psline[linestyle=dotted,linewidth=1pt](16.5,5.3)(18.5,4.6)
  \psline[linestyle=dotted,linewidth=1pt](16.5,5.3)(18.5,6)
  \psline[linestyle=dotted,linewidth=1pt](16.5,8)(18.5,7.2)
  \psline[linestyle=dotted,linewidth=1pt](16.5,8)(18.5,8.6)
\uput{1}[90](4,2.75){$0$}
\uput{1}[90](10,4.8){$1$}
\uput{1}[90](10,.8){$2$}
\uput{1}[90](16,6.7){$3$}
\uput{1}[90](16,4.1){$5$}
\uput{1}[90](16,1.35){$4$}
\uput{1}[90](16,-1.25){$6$}
\uput{1}[90](6.5,4.5){$(0,\psi_0(0))$}
\uput{1}[90](6.5,.6){$(1,\psi_0(1))$}
\uput{1}[90](13,6.8){$(0,\psi_1(0,0))$}
\uput{1}[90](13,3.5){$(1,\psi_1(0,1))$}
\uput{1}[90](13,1.7){$(0,\psi_1(1,0))$}
\uput{1}[90](13,-1.6){$(1,\psi_1(1,1))$}
\end{pspicture}
\end{quote}
\caption{Moore diagram of the automaton $\mathfrak A(0)$, $p=2$; so
$\omega(0)$ is the empty word, $\omega(1)=0$, $\omega(2)=1$, $\omega(3)=00$,
$\omega(4)=01$, $\omega(5)=10$, $\omega(6)=11$,\ldots}
\label{fig:constr}
\end{figure}

Now, as both $f$ and $f_{\mathfrak A(s_0)}$ are 1-Lipschitz, thus continuous with respect to the $p$-adic metric, and as $\N_0$ is dense in $\Z_p$,
to prove that
 $f=f_{\mathfrak A(s_0)}$ is suffices to show that
 \begin{equation}
 \label{eq:auto-1}
 f_{\mathfrak
A(s_0)}(\tilde w)\equiv f(\tilde w)\pmod{p^{\Lambda(w))}}
\end{equation}
for all finite non-empty words $w\in\F_p^\star$, where $\tilde w\in\N_0$ stands for the integer whose base-$p$
expansion is $w$.  We prove that \eqref{eq:auto-1} holds for all $w\in\F_p^\star$
once  $\Lambda(w)=n>0$ by induction on $n$.

If
$n=1$ then $\tilde w\in\F_p$; so once $w$ is feeded to
 $\mathfrak A$, the automaton reaches the state $S(w,0)=\nu(w)$ (cf. \eqref{eq:auto-st})
 and outputs
 $O(w,0)=\psi_0(\theta_1(w))
 \equiv f(\tilde w)\pmod p$ 
 (cf. \eqref{eq:auto-out}), see \eqref{eq:tri}. 
Thus, \eqref{eq:auto-1}
 holds in this case.

Now assume that \eqref{eq:auto-1} holds for all $w\in\F_p^\star$ such that $\Lambda(w)=n<k$ and prove that \eqref{eq:auto-1} holds also when $\Lambda(w)=n=k$.
Represent $w=r\circ v$, where $r\in\F_p$ and $\Lambda(v)=n-1$. By the induction
hypothesis, after the word   $v$ has been feeded to $\mathfrak A$, the automaton
reaches the state $\nu(v)$ and outputs the word $v_1$ of length $n-1$
such that
$\tilde v_1\equiv f(\tilde v)\md p^{n-1}$. Next, being feeded by the letter $r$, the
automaton (which is in the state $\nu(v)$ now) outputs the letter
$O(r,\nu(v))=\psi_{\Lambda(\omega(\nu(v)))}(\theta_{\Lambda(\omega(\nu(v)))+1}(r\circ\omega(\nu(v))))=
\psi_{\Lambda(v)}(\theta_{\Lambda(v)+1}(r\circ v))$.
This means that once feeded by $w$, the automaton $\mathfrak A(s_0)$ 
outputs the word
$v_2=(\psi_{\Lambda(v)}(\theta_{\Lambda(v)+1}(r\circ v)))\circ v_1$. Now
note that $\tilde v_2\equiv
f(\tilde
w)\pmod{p^n}$ by \eqref{eq:tri}.
\end{proof}
\begin{note}
From the proof of Theorem \ref{thm:auto-1L} it is clear that the mapping
 $f_{n,\mathfrak
A(s_0)}\:\Z/p^n\Z\>\Z/p^n\Z$ is just a reduction  modulo $p^n$ of the automaton function $f_{\mathfrak
A(s_0)}$: $f_{n,\mathfrak
A(s_0)}=f_{\mathfrak
A(s_0)}\md p^n$ for all $n=1,2,3,\ldots$.
\end{note}
\begin{note}
In automata theory, \emph{word transducers} (or, \emph{asynchronous
automata}) are also considered; the
latter are automata that allow (possibly empty) words as output for each transition. Although the automata we  consider  are all synchronous (i.e..,
letter transducers  rather than word transducers),
it is worth mentioning here that the automaton function of a word transducer
whose input/output alphabets are $\F_p$ can also be considered as a continuous
(however, not necessarily 1-Lipschitz any longer) mapping
from $\Z_p$ to $\Z_p$ once the transducer is non-degenerate, see
\cite[Theorem 2.4]{Grigorch_auto}. 
\end{note}
Further in the paper, given a 1-Lipschitz function $f\:\Z_p\>\Z_p$, via $\mathfrak A_f(s_0)$
we denote an automaton $\langle\F_p,\Cal S,\F_p,S,O,s_0\rangle$ whose automaton function is $f$; that is, $f_{\mathfrak A_f(s_0)}=f$. Note that given $f$, the automaton $\mathfrak A_f(s_0)$ is \emph{not} unique: There are numerous automata
that have the same automaton function $f$. Nonetheless, this non-uniqueness will not
cause misunderstanding since in the paper we are mostly interested with automata
functions rather than with `internal structure' (e.g., with state sets, state
transition and output functions, etc.) of automata themselves.

\section{The $p$-adic ergodic theory and transitivity of automata}
\label{sec:p-erg}
The ring $\Z_p$ can be endowed with a probability measure $\mu_p$:
Elementary $\mu_p$-measurable sets are balls $B_{p^{-r}}(a)=a+p^r\Z_p=\{z\in\Z_p\colon z\equiv a\pmod{p^r}\}$  of radii $p^{-r}$, $r=1,2,\ldots$, centered
at $a\in\Z_p$. In other words, the ball $B_{p^{-r}}(a)$ is a  set of all infinite words over alphabet $\F_p=\{0,1,\ldots,p-1\}$
that have common prefix of length  $r$. We put $\mu_p(B_{p^{-r}}(a))=p^{-r}$
thus endowing $\Z_p$ with a probability measure $\mu_p$ (which actually is a normalized
Haar measure). Note that all 1-Lipschitz mappings $f\:\Z_p\>\Z_p$ are $\mu_p$-measurable
(i.e., $f^{-1}(S)$ is $\mu_p$-measurable once $S\subset\Z_p$ is $\mu_p$-measurable).

A $\mu_p$-measurable mapping $f\colon \Z_p\>\Z_p$ is called  \emph{ergodic} if  the two following
conditions hold simultaneously: 
\begin{enumerate}
\item $f$ \emph{preserves the measure} $\mu_p$; i.e., $\mu_p(f^{-1}(S))=\mu_p(S)$ for each $\mu_p$-measurable subset 
$S\subset \Z_p$, and   
\item $f$ has no proper invariant $\mu_p$-measurable 
subsets:
$f^{-1}(S)=
S$ 
{implies either} 
$\mu_p(S)=0$, 
{or} 
$\mu_p(S)=1$. 
\end{enumerate}
A family $\Cal F=\{f_i\:i\in I\}$ of $\mu_p$-measurable mappings $f_i\:\mathbb Z_p\>\mathbb
Z_p$ (which are not necessarily measure-preserving mappings) 
is called \emph{ergodic} if the mappings $f_i$, $i\in I$, have no common $\mu_p$-measurable invariant subset 
other than sets of measure 0 or 1; that is, if there exists a $\mu_p$-measurable subset $S\subset\mathbb Z_p$ such that
$f^{-1}_i(S)= S$ for all $i\in I$, then necessarily either $\mu_p(S)=0$, or
$\mu_p(S)=1$.

Note that in the paper speaking of  ergodicity of a \emph{single}  mapping we always mean
the mapping is measure-preserving; whereas
in general ergodic theory the non-measure-preserving ergodic mappings (the ones
that satisfy only the second condition (ii) of the above two) are sometimes
also concerned.
To illustrate the notion of ergodicity we use, consider a finite set $M$ endowed with a natural probability
measure $\mu(A)=\#A/\#M$ for all $A\subset M$ (where $\#A$ stands for the
number of elements in $A$). The measure-preservation of
the mapping $f\:M\>M$ is equivalent to the bijectivity of $f$, whereas the
ergodicity of $f$ (when respective conditions (i) and (ii) hold simultaneously) is equivalent to the transitivity of the mapping $f$ in the meaning of Definition \ref{def:trans-1}; and the ergodicity of the family $\Cal F$ of mappings $f_i\:M\> M$, $i\in I$, is equivalent to the transitivity of the family
$\Cal F$ in the meaning of Definition \ref{def:trans}.

As in the paper we deal with the only measure $\mu_p$, so further speaking
of measure-preservation (as well as of measurability and of  ergodicity) we omit mentioning the respective measure.
From the $p$-adic ergodic theory (see \cite{AnKhr}) the following theorem
can be deduced: 
\begin{thm}
\label{thm:word-trans=erg}
A system $\mathfrak A=\langle\F_p,\Cal S,\F_p,S,O\rangle$ is
word transitive 
if and only if the automaton function $f_{\mathfrak
A(s_0)}$ on $\Z_p$ is ergodic.  If the system $\mathfrak A$ is completely
transitive, the
family $f_{\mathfrak A(s)}$, $s\in\Cal
S$, of automata functions is ergodic.
\end{thm}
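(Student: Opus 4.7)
Plan: I would derive both parts of the theorem from the cornerstone criterion of $p$-adic ergodic theory (see \cite{AnKhr}) characterizing ergodicity of $1$-Lipschitz maps via the transitivity of their reductions modulo $p^n$. Concretely I would invoke two facts: (A) a $1$-Lipschitz map $g\colon\Z_p\to\Z_p$ is ergodic with respect to the Haar measure $\mu_p$ if and only if for every $n\in\N$ the reduction $g\md p^n$ is a transitive cyclic permutation of $\Z/p^n\Z$ in the sense of Definition \ref{def:trans-1}; and (B) a family $\{g_i\}_{i\in I}$ of $1$-Lipschitz maps $\Z_p\to\Z_p$ is ergodic with respect to $\mu_p$ whenever the reduced family $\{g_i\md p^n\}_{i\in I}$ is transitive on $\Z/p^n\Z$ in the sense of Definition \ref{def:trans} for every $n\in\N$.

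For the first assertion, the Note following Theorem \ref{thm:auto-1L} identifies $f_{n,\mathfrak A(s_0)}$ with $f_{\mathfrak A(s_0)}\md p^n$, so by Definition \ref{def:auto-trans} word transitivity of $\mathfrak A$ is precisely the statement that $f_{n,\mathfrak A(s_0)}$ is a transitive permutation of $W_n=\Z/p^n\Z$ for every $n$. Since $f_{\mathfrak A(s_0)}$ is $1$-Lipschitz by Theorem \ref{thm:auto-1L}, fact (A) yields the equivalence with ergodicity of $f_{\mathfrak A(s_0)}$ on $\Z_p$. For the second assertion, complete transitivity (Definition \ref{def:auto-trans}) asserts that $\{f_{n,\mathfrak A(s)}:s\in\Cal S\}$ is transitive on $W_n$ for every $n$; the Note again identifies each $f_{n,\mathfrak A(s)}$ with $f_{\mathfrak A(s)}\md p^n$, and fact (B) then delivers ergodicity of the family $\{f_{\mathfrak A(s)}\}_{s\in\Cal S}$ on $\Z_p$.

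The principal obstacle lies in the content of (A) and (B) themselves. The easy direction of (A) is routine, as each cylinder $B_{p^{-n}}(a)$ is $\mu_p$-measurable and any nontrivial reduced-family-invariant subset at level $n$ lifts to a nontrivial invariant measurable subset of $\Z_p$. The converse direction of (A), and fact (B), rely crucially on the ultrametric property that any $1$-Lipschitz map sends a ball $B_{p^{-n}}(a)$ into a single ball of radius $p^{-n}$, so the tower of partitions $\{B_{p^{-n}}(a):a\in\Z/p^n\Z\}_{n\in\N}$ is respected by every $g_i$. Given a family-invariant measurable $S$ with $0<\mu_p(S)<1$, my plan is to pick density points $x_0\in S$ and $y_0\in\Z_p\setminus S$, use transitivity of the reduced family at level $n$ to find $s\in\Cal S$ with $f_{\mathfrak A(s)}(B_{p^{-n}}(x_0\md p^n))\subset B_{p^{-n}}(y_0\md p^n)$, and combine the invariance $f_{\mathfrak A(s)}^{-1}(S)=S$ with Lebesgue differentiation along the partition tower to derive a contradiction between the near-unit $S$-density at $x_0$ and the near-zero $S$-density at $y_0$. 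Making this quantitative step rigorous, in spite of the fact that individual $f_{\mathfrak A(s)}$ need not be measure-preserving (witness Example \ref{exm:const}), is the technical core of the argument that the theorem defers to the $p$-adic ergodic machinery developed in \cite{AnKhr}.
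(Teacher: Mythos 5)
The paper itself supplies no proof of this theorem: it simply asserts that the statement ``can be deduced'' from the $p$-adic ergodic theory of \cite{AnKhr}. Your treatment of the first assertion is sound and matches that deferral: the equivalence ``$f$ ergodic $\Leftrightarrow$ $f\md p^n$ transitive for all $n$'' for $1$-Lipschitz maps is a genuine citable theorem of \cite{AnKhr} (note that transitivity mod $p^n$ for all $n$ forces bijectivity mod $p^n$ for all $n$, hence measure preservation, so the criterion applies in exactly the form you need), and combined with the Note after Theorem \ref{thm:auto-1L} it gives the first sentence of the theorem.

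The gap is in your argument for fact (B), i.e.\ for the second assertion. The invariance $f_{\mathfrak A(s)}^{-1}(S)=S$ together with $f_{\mathfrak A(s)}(B_{p^{-n}}(x_0))\subset B_{p^{-n}}(y_0)$ tells you only that $f_{\mathfrak A(s)}$ maps $S\cap B_{p^{-n}}(x_0)$ into $S\cap B_{p^{-n}}(y_0)$ and $S^c\cap B_{p^{-n}}(x_0)$ into $S^c\cap B_{p^{-n}}(y_0)$; these are statements about images of sets, and since $f_{\mathfrak A(s)}$ need not be measure-preserving, no inequality between $\mu_p(S\cap B_{p^{-n}}(x_0))$ and $\mu_p(S\cap B_{p^{-n}}(y_0))$ follows --- a $1$-Lipschitz map may collapse a set of relative density $1-\varepsilon$ onto a single point of the target ball, and the preimage of a null set may have full measure. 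So Lebesgue differentiation along the ball filtration does not produce the advertised contradiction between near-unit density at $x_0$ and near-zero density at $y_0$; the ``quantitative step'' you defer is not a technicality but precisely where the strategy breaks. The automaton $\mathfrak C(0)$ of Example \ref{exm:const} makes this concrete: it is completely (indeed absolutely) transitive, yet every member of its family is a constant function, for which every inclusion ``large-density set into small-density ball'' is vacuously realizable; the family is nonetheless ergodic, but for the set-theoretic reason that $f_{\mathfrak C(s)}^{-1}(S)\in\{\emptyset,\Z_p\}$, not for any density reason. A correct proof of the second assertion must exploit more than transitivity of the reductions plus differentiation --- e.g.\ the localization structure of the family $\{f_{\mathfrak A(s)}\}_{s\in\Cal S}$ (restricting $f_{\mathfrak A(s)}$ to a ball and rescaling yields another member of the family) --- or must genuinely invoke a proved statement from \cite{AnKhr} rather than re-derive it by the argument you sketch.
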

Remind that under conventions from
the beginning of the paper, $s_0$ is the state of
the system $\mathfrak A$ such that all other states are reachable from $s_0$.

Theorem \ref{thm:word-trans=erg} implies a number of various methods to determine the word transitivity of
automata: For instance, a \emph{binary} automaton $\mathfrak P$ (that is
an automaton with
a binary input and output; i.e., with
$p=2$) whose automaton function $f_{\mathfrak P}$ is a polynomial with integer coefficients (i.e., $f_\mathfrak P=g$ where $g(x)\in\Z[x]$) is word transitive if and only if it is 3-word transitive; that is, the transitivity of $\mathfrak
P$ on the set $W_3$
of all binary words of length 3 is equivalent to the transitivity of $\mathfrak
P$ on the
set $W_n$ of all binary words of length $n$, for all $n=1,2,3,\ldots$. Moreover,
a
binary automaton $\mathfrak F$ is word transitive if and
only if its automaton function is of the form $f_{\mathfrak
F}(x)=1+x+2(g(x+1)-g(x))$, where $g=g_{\mathfrak G}$ is an automaton function
of some binary automaton $\mathfrak G$. For other results of this sort and
for the whole $p$-adic ergodic theory see \cite{AnKhr}. Although complete
transitivity of the system $\mathfrak A=\langle\F_p,\Cal S,\F_p,S,O\rangle$ is also related to ergodicity; however, to the ergodicity of the family of automata functions $f_{\mathfrak
A(s)}$, $s\in\Cal S$, cf. Definition \ref{def:auto-trans} and Theorem \ref{thm:word-trans=erg}, rather than to
the ergodicity of a single automaton function $f_{\mathfrak A(s_0)}$. This is why to determine complete/absolute transitivity rather than just word
transitivity we need some more sophisticated techniques that are discussed
further.

\section{Plots of automata functions on the real plane}
\label{sec:p-real}
Remind that under conventions from
the beginning of the paper, there exists a state $s_0$ of
the system $\mathfrak A$ such that all other states are reachable from $s_0$;
so although further results of the paper are stated mostly for automata, they hold
for systems as well.

Given an automaton $\mathfrak A(s_0)$, 
consider the corresponding automaton function $f=f_{\mathfrak A(s_0)}\colon\Z_p\rightarrow\Z_p$. Denote $E_k(f)$ the set of  all the following points $e_k^f(x)$  of closed Euclidean unit square $\mathbb I^2=[0;1]\times[0;1]\subset\R^2$: 
$$e_k^f(x)=\left(\frac{x\md p^k}{p^k},\frac{f(x)\md p^k}{p^k}
\right),$$
where $x\in\Z_p$ and $\md p^k$ is a reduction 
modulo $p^k$, cf. \eqref{eq:md}.
Note that $x\md p^k$ corresponds to the 
prefix of length $k$ of the infinite word $x\in\Z_p$, i.e., to the input word of length $k$  of the automaton
$\mathfrak A(s_0)$; while
$f(x)\md p^k$ corresponds to the respective output  word of length $k$.
That is, given an input word $w=\chi_{k-1}\cdots\chi_1\chi_0$
and the corresponding output word  $w^\prime=\xi_{k-1}\cdots\xi_1\xi_0$,
we consider in $\mathbb I^2$ the set of all points 
$$(\chi_{k-1}p^{-1}+\cdots+\chi_1p^{-k+1}+\chi_0p^{-k},
\xi_{k-1}p^{-1}+\cdots+\xi_1p^{-k+1}+\xi_0p^{-k}),$$
for all pairs $(w,w^\prime)$ of input/output words of length $k$.
\begin{figure}[h]
\begin{quote}\psset{unit=0.5cm}
 \begin{pspicture}(0,8)(24,12)

  \psline[linecolor=black]{<-}(20,11)(14,11)
  \psline[linecolor=black]{<-}(10,11)(3.7,11)

  \psline[linecolor=black](3.7,11.3)(3.7,10.7)

  \psframe[fillstyle=solid,fillcolor=yellow,linecolor=black,linewidth=2pt](10,10)(14,12)
  
  \uput{0}[90](12,10.7){$\mathfrak A$}
  
  \uput{0}[90](1.9,9.2){$x\md p^k=$}
  \uput{0}[90](22.6,9.2){$=f(x)\md p^k$}
  \uput{0}[90](11.9,8.7){${\underbrace{{\black
  \chi_{k-1}\cdots\cdots\cdots\chi_1\chi_0\ \ \ \ \ \ \ \ \ \ \ \ \ \ \ \ \ \ \ \ \ \ \ \ \ \ \ \xi_{k-1}\cdots\cdots\cdots\xi_1\xi_0}}}
  $}
  
  \uput{0}[90](12,7.5){{$e_k^f(x)=(0.\chi_{k-1}\ldots\chi_1\chi_0,0.\xi_{k-1}\ldots\xi_1\xi_0)$}}
  
 \end{pspicture}
\end{quote}

\end{figure}

 The set $E_k(f)$ may be considered as a sort of a plot 
of the automaton function $f$ on  the real plane $\R^2$.  The plot characterizes behaviour
of the automaton; namely, it can be observed that basically the behaviour is of  two types only:  
\begin{enumerate}
\item as $k\to\infty$, the point set $E_k(f)$ 
is getting more and more dense 
(cf. Fig. 
\ref{fig:Quad-16}--\ref{fig:Quad-23}, $p=2$) , or 
\item  $E_k(f)$ is getting less and less dense while $k\to\infty$,
cf. Fig. 
\ref{fig:KlSh-lac-16}--\ref{fig:KlSh-lac-22} ($p=2$).
\end{enumerate}
\begin{figure}[h]
\begin{minipage}[b]{.495\linewidth}
\epsfig{file=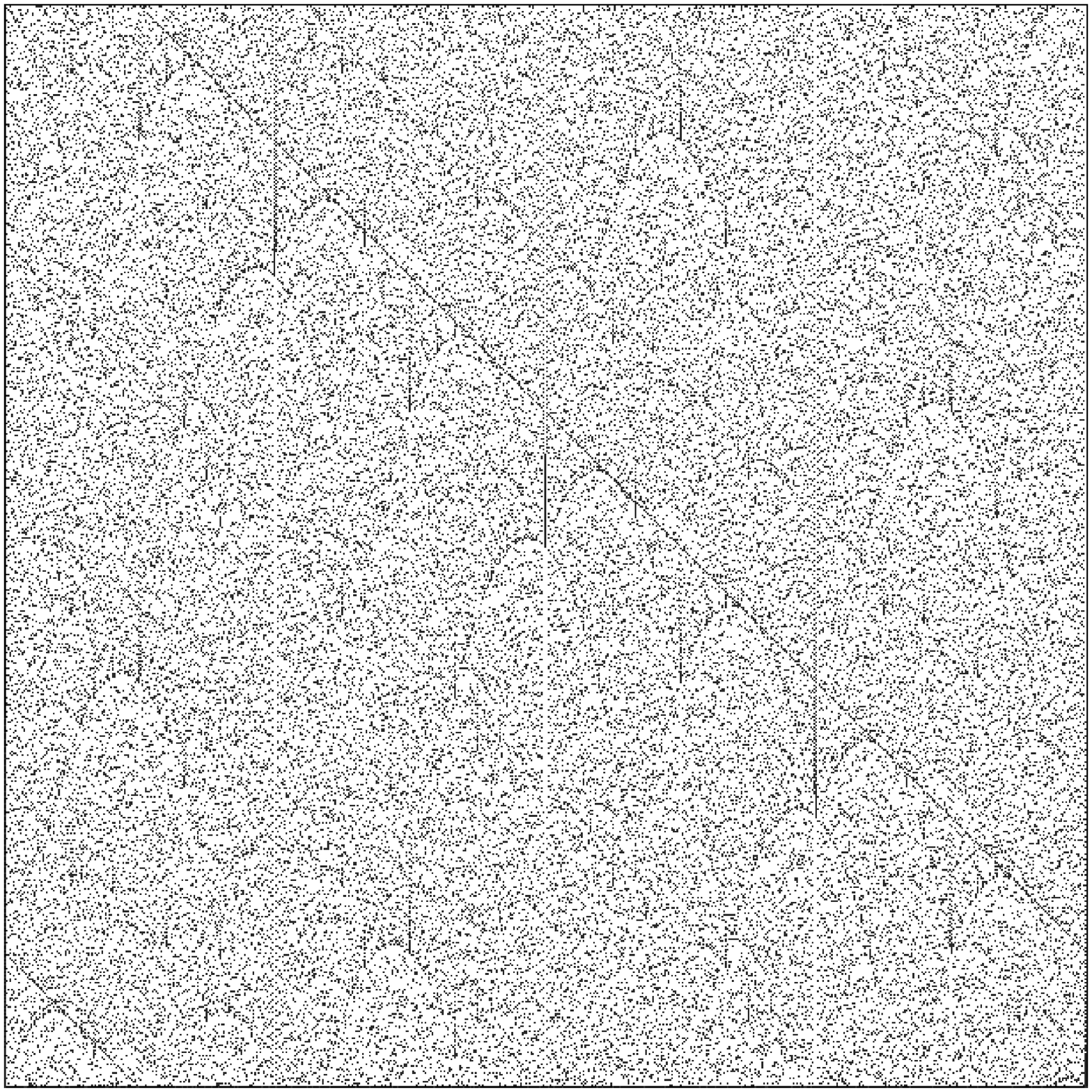,width=\linewidth}
\caption{%
{$f(x)=2x^2+3x+1$, $k=16$}}
\label{fig:Quad-16}
\end{minipage}\hfill
\begin{minipage}[b]{.495\linewidth}
\epsfig{file=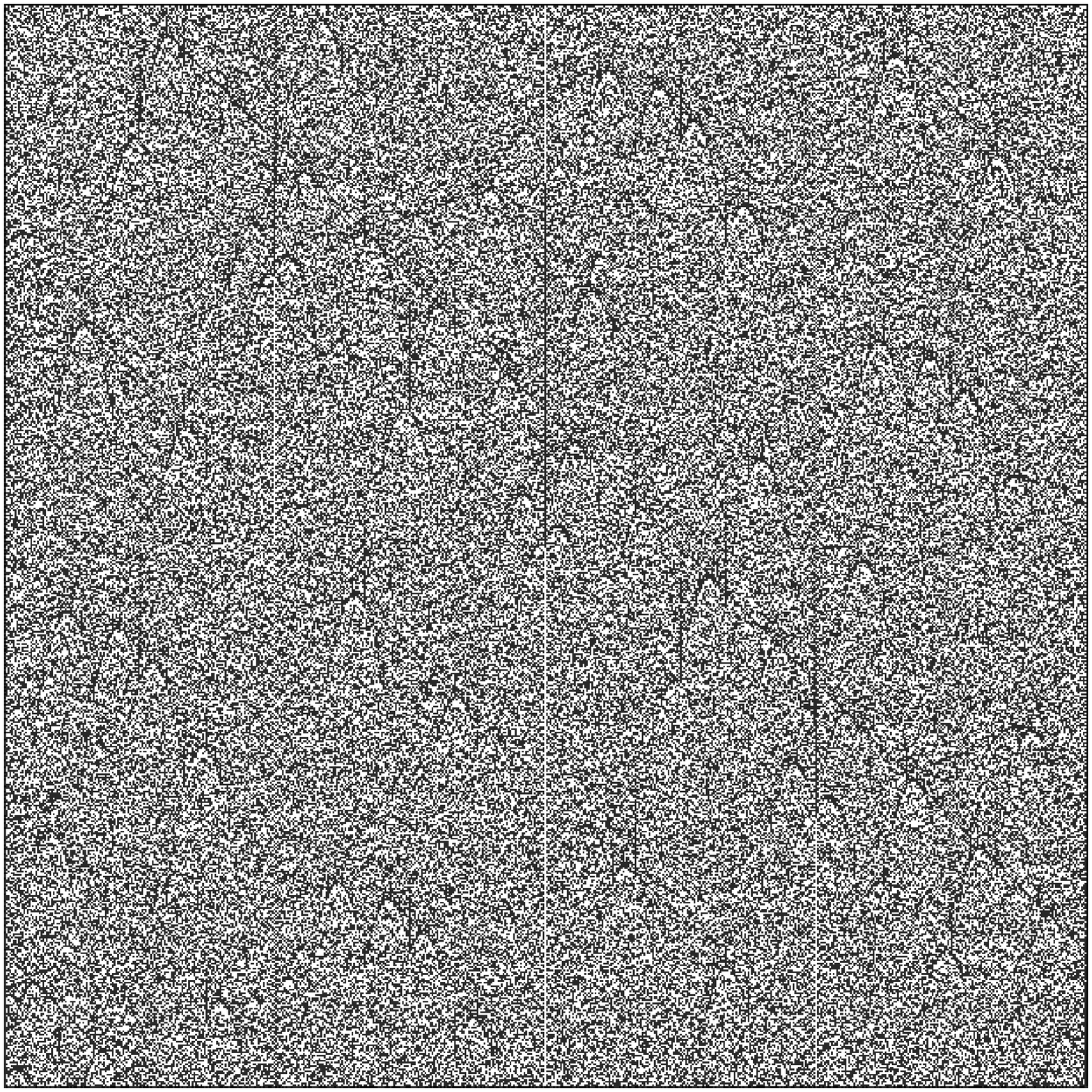,width=\linewidth}
\caption{%
{Same function, $k=18$}}
\label{fig:Quad-18}
\end{minipage}
\begin{minipage}[b]{.495\linewidth}
\epsfig{file=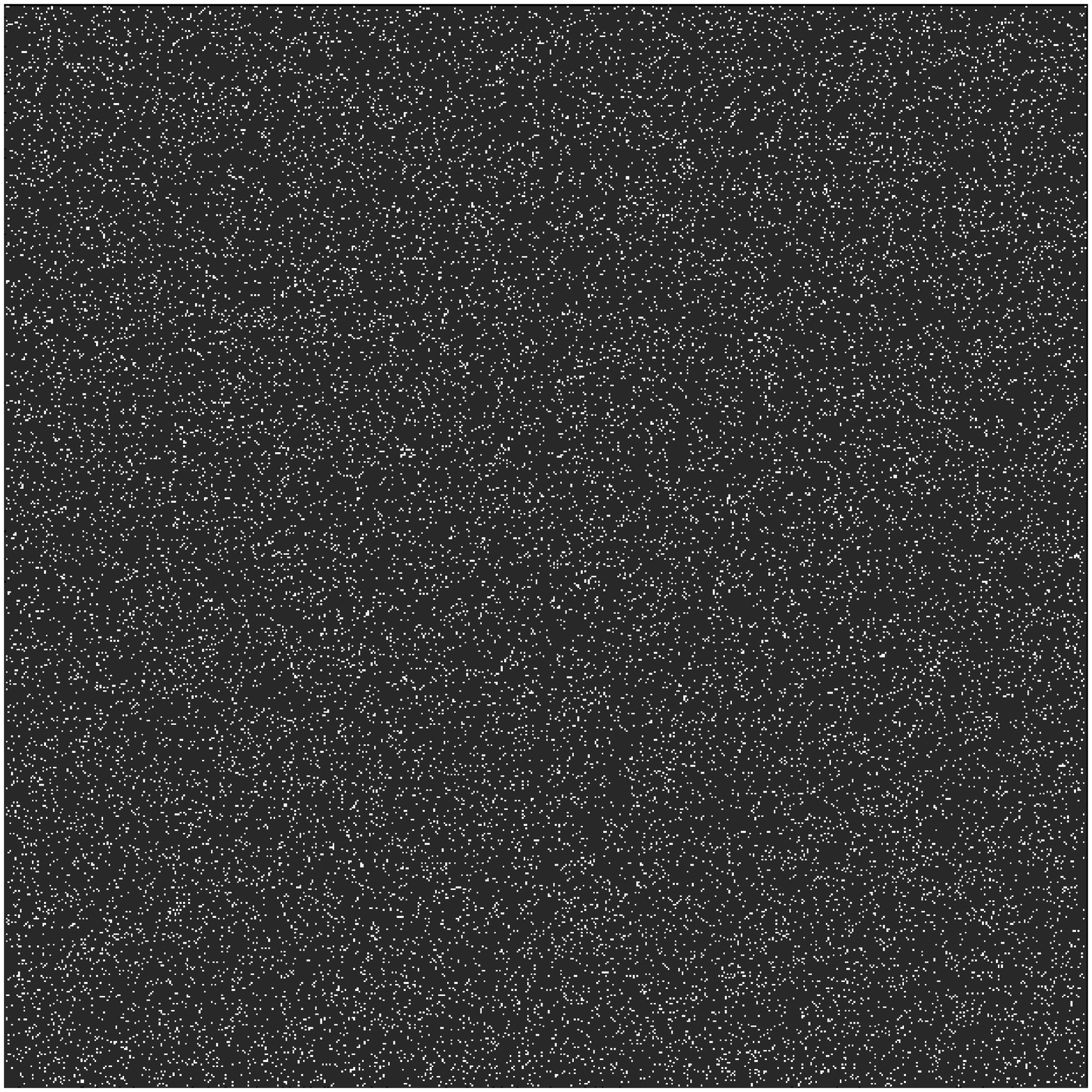,width=\linewidth}
\caption{%
{Same function, $k=20$}}
\label{fig:Quad-20}
\end{minipage}\hfill
\begin{minipage}[b]{.495\linewidth}
\epsfig{file=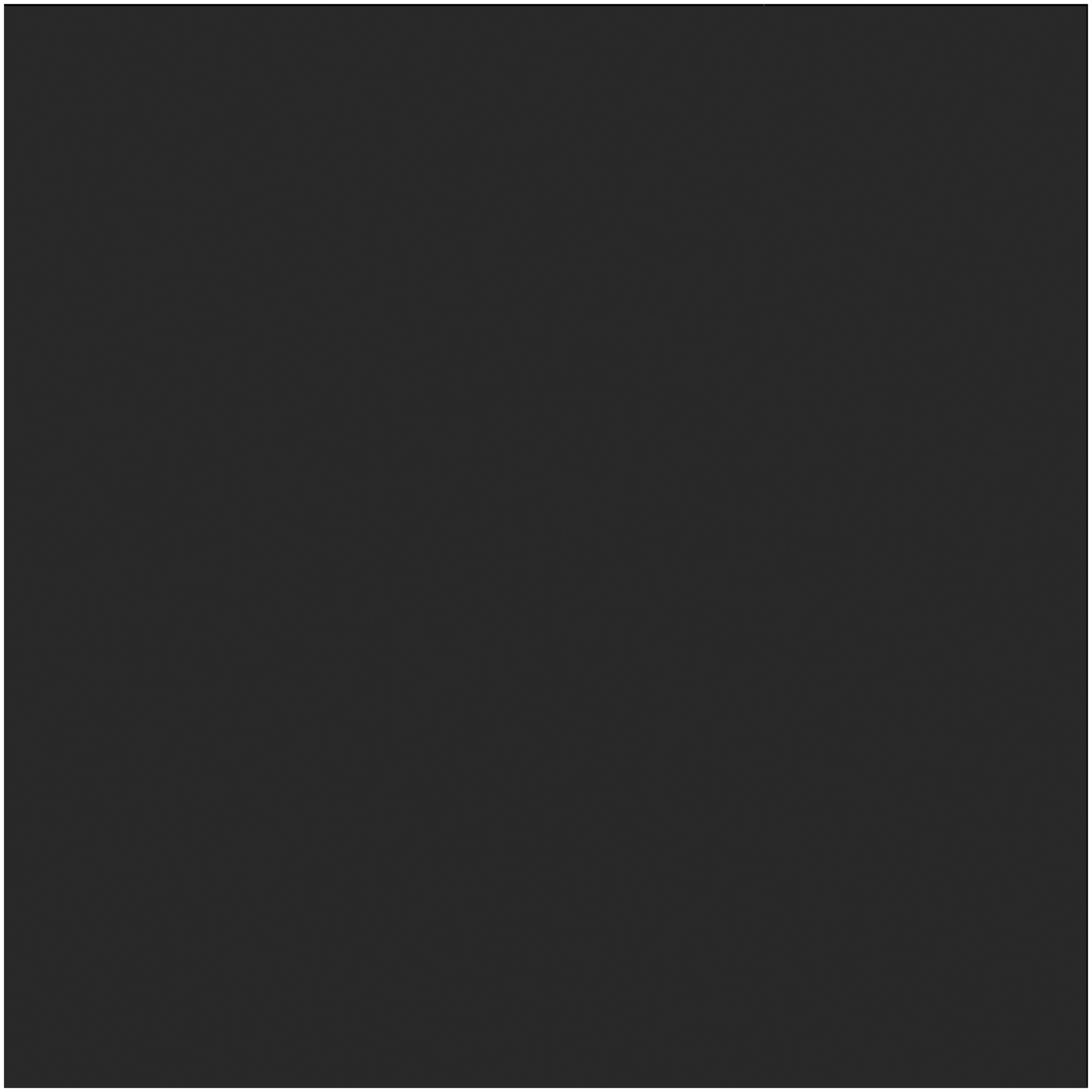,width=\linewidth}
\caption{%
{Same function, $k=23$}}
\label{fig:Quad-23}
\end{minipage}
\end{figure}

\begin{figure}[th]
\begin{minipage}[b]{.495\linewidth}
\epsfig{file=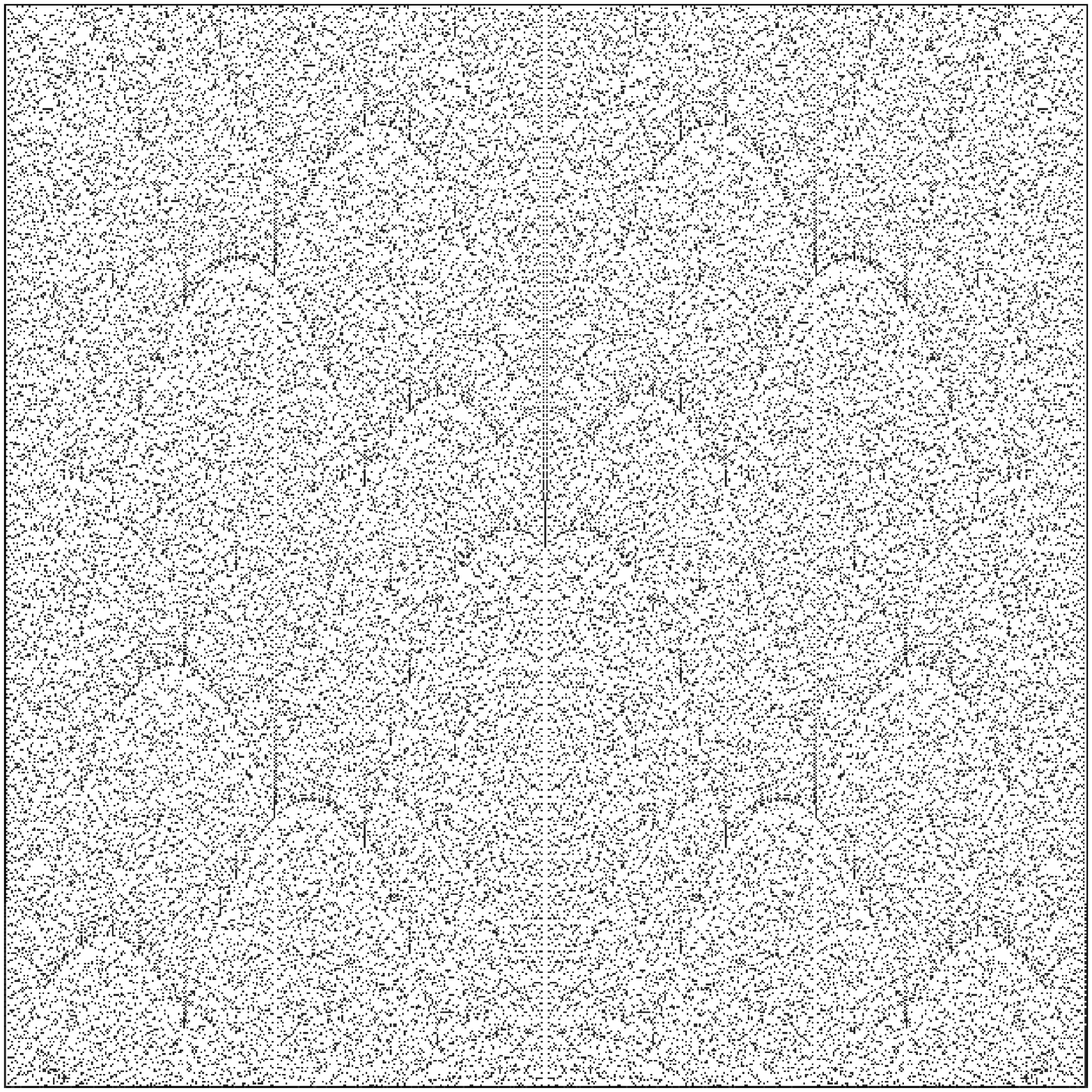,width=\linewidth}
\caption{%
{$f(x)=
x+x^2\OR 
(-131065)$,
$k=16$}}
\label{fig:KlSh-lac-16}
\end{minipage}\hfill
\begin{minipage}[b]{.495\linewidth}
\epsfig{file=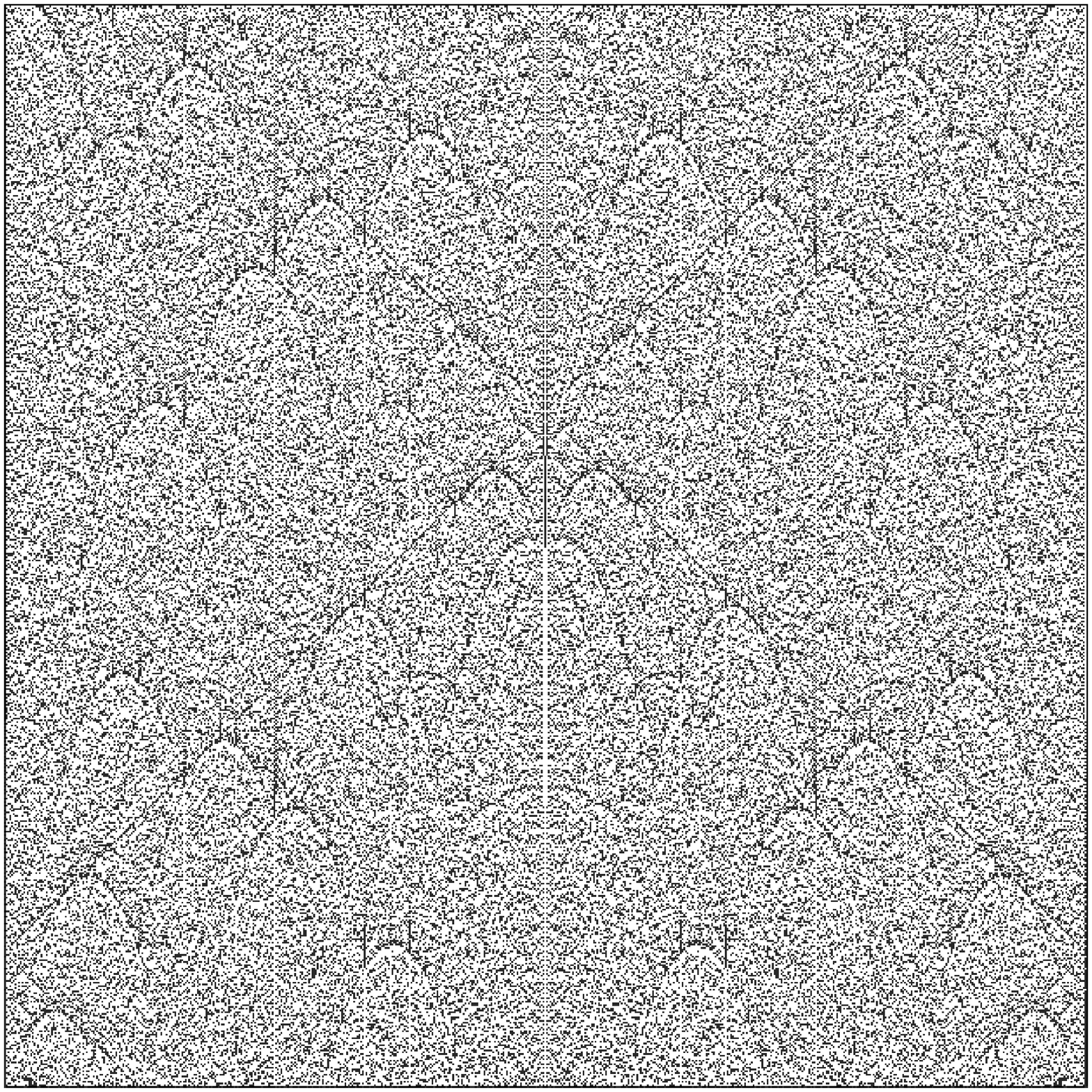,width=\linewidth}
\caption{%
{Same function, $k=17$}}
\label{fig:KlSh-lac-17}
\end{minipage}
\begin{minipage}[b]{.495\linewidth}
\epsfig{file=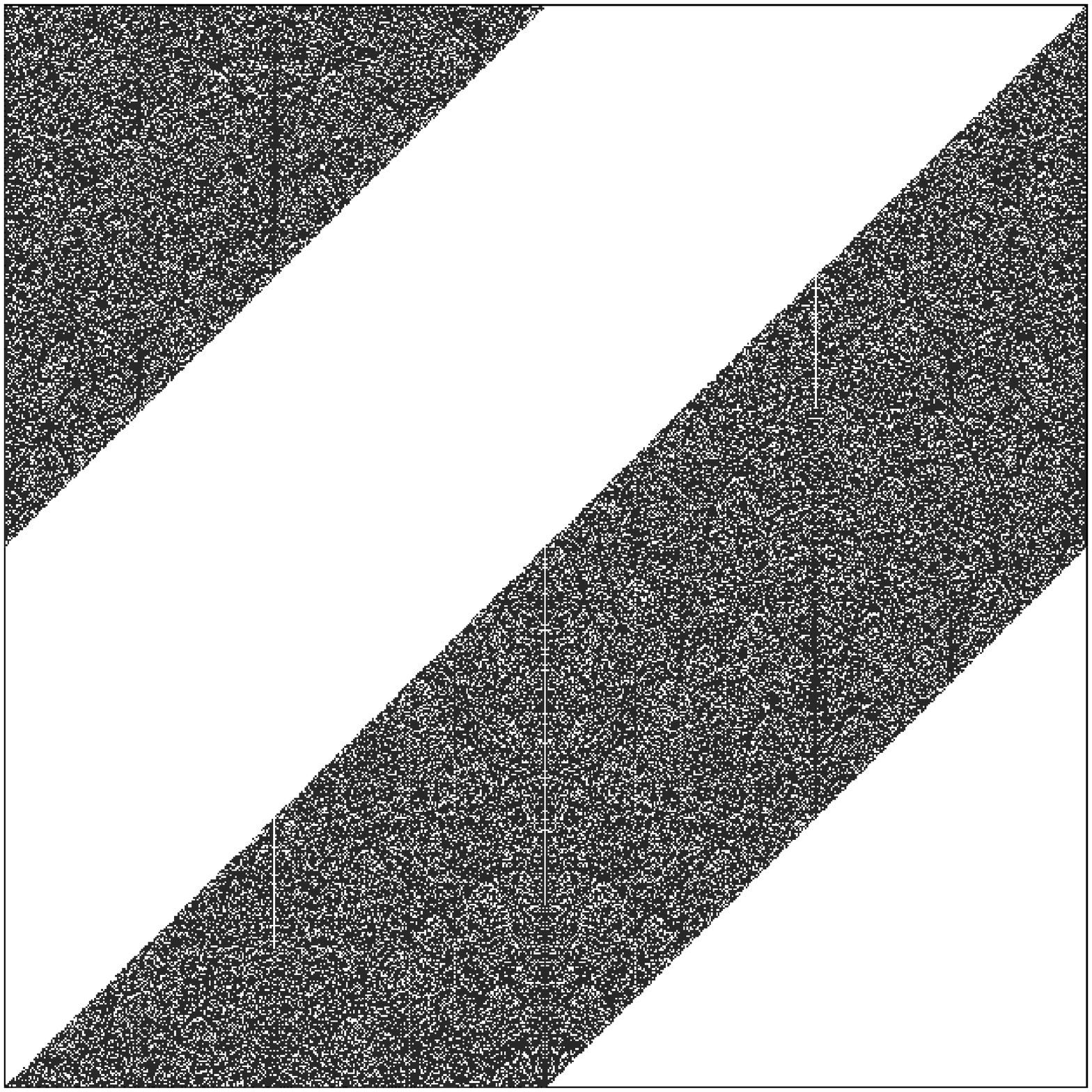,width=\linewidth}
\caption{%
{Same function, $k=18$}}
\label{fig:KlSh-lac-18}
\end{minipage}\hfill
\begin{minipage}[b]{.495\linewidth}
\epsfig{file=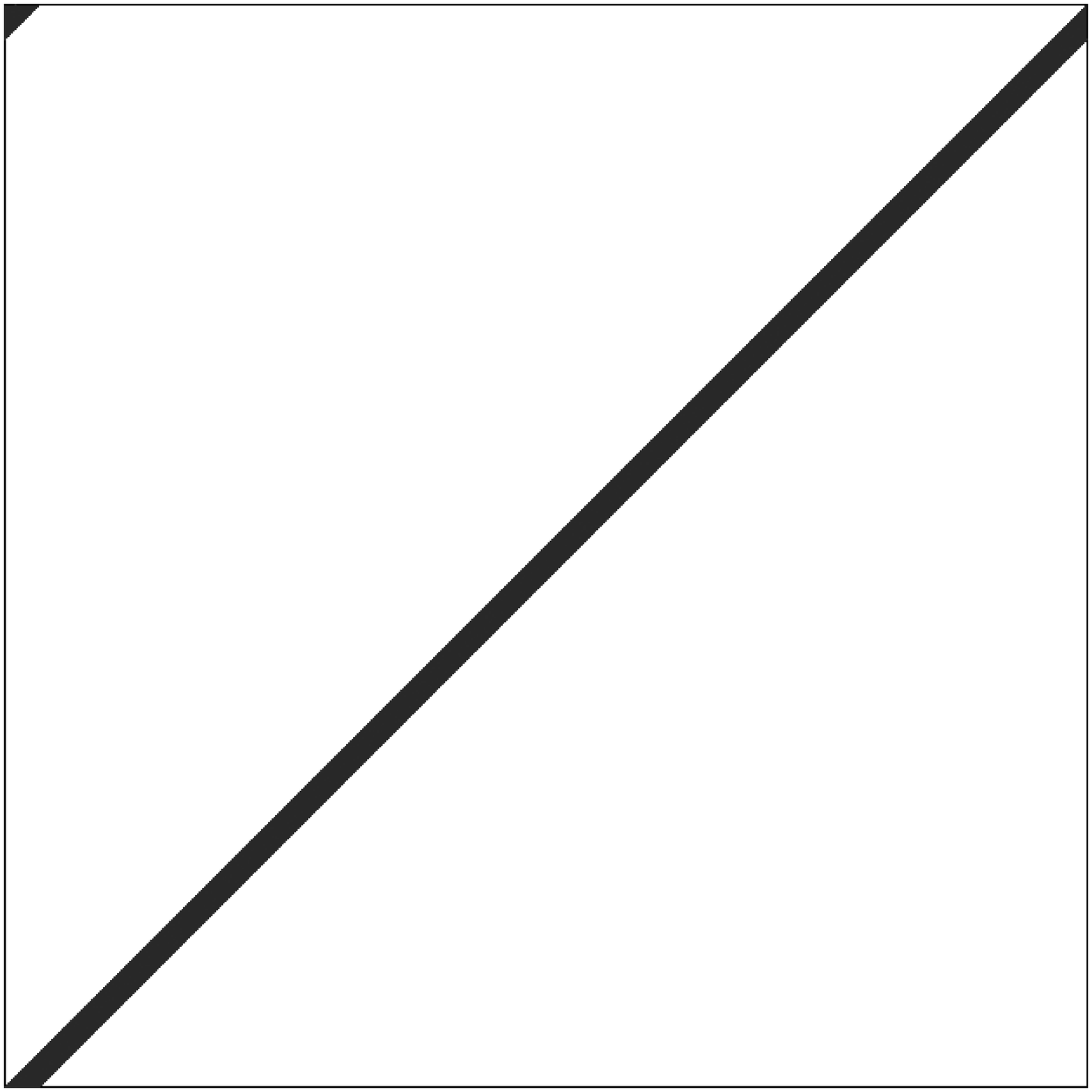,width=\linewidth}
\caption{%
{Same function, $k=22$}}
\label{fig:KlSh-lac-22}
\end{minipage}
\end{figure}
It is intuitively clear that, say, for pseudorandom number generation automata
of type (i) are preferable\footnote{For a deeper mathematical reasoning
see \cite{AnKhr}.}; so we need to explain/prove the phenomenon and to develop techniques in order to determine/construct automata of type
(i).
\subsection{The automata 0-1 law}
Denote $\Cal E(f)$ the closure of the set $E(f)=\bigcup_{k=1}^\infty E_k(f)$ in the topology of the real plane $\R^2$. As $\Cal E$ is closed, it is measurable
with respect to the Lebesgue measure on the real plane $\R^2$. Let  $\alpha(f)$ be the Lebesgue measure  
 of  $\Cal E(f)$. It is clear that $0\le\alpha(f)\le1$; but it turns out that in fact only two extreme cases occur: $\alpha(f)=0$ or $\alpha(f)=1$.
This is the first of the main results of the paper: 
\begin{thm}[The automata 0-1 law]
\label{thm:Auto_0-1}
For
$f$, the following alternative holds: 
Either 
$\alpha(f)=0$ \textup{(equivalently, $\Cal E(f)$ is nowhere dense in $\mathbb I^2$)},
or 
$\alpha(f)=1$ \textup{(equivalently, $\Cal E(f)=\mathbb I^2$)}.
\end{thm}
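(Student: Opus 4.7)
The plan is to introduce the coordinate-wise multiplication-by-$p$ map $T\colon[0,1)^2\to[0,1)^2$ defined by $T(u,v)=(\{pu\},\{pv\})$, and to exploit the facts that (a) $T$ preserves planar Lebesgue measure and is ergodic (being the direct product of two copies of the Bernoulli shift $u\mapsto\{pu\}$ on $[0,1)$, which is mixing), and (b) $\Cal E(f)$ is a $T$-invariant set modulo null sets. Once (b) is established, ergodicity of $T$ immediately forces $\alpha(f)\in\{0,1\}$, and the claimed equivalences fall out.

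The key computation for (b) is $T(E_k(f))\subseteq E_{k-1}(f)$ for every $k\ge 1$. Writing $x\bmod p^k=\chi_0+\chi_1p+\cdots+\chi_{k-1}p^{k-1}$, one has
\[
p\cdot\frac{x\bmod p^k}{p^k}=\chi_{k-1}+\frac{x\bmod p^{k-1}}{p^{k-1}},
\]
where the second summand lies in $[0,1)$, so the fractional part of the left-hand side is precisely $(x\bmod p^{k-1})/p^{k-1}$. Because $f$ is $1$-Lipschitz, $f(x)\bmod p^{k-1}$ depends only on $x\bmod p^{k-1}$, and so the same identity holds in the second coordinate. Hence $T$ sends the point of $E_k(f)$ indexed by $x$ to the point of $E_{k-1}(f)$ indexed by $x\bmod p^{k-1}$. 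Summing over $k$ yields $T(E(f))\subseteq E(f)$.

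Passing to the closure requires some care, because $T$ is not globally continuous: it jumps on the grid lines $\{u=c/p\}\cup\{v=d/p\}$, $c,d\in\{1,\dots,p-1\}$. However, this grid is Lebesgue-null, and $T$ is continuous off it. Thus for every $(u,v)\in\Cal E(f)$ off the grid, any sequence $(u_n,v_n)\in E(f)$ with $(u_n,v_n)\to(u,v)$ yields $T(u_n,v_n)\to T(u,v)$ with $T(u_n,v_n)\in E(f)$, whence $T(u,v)\in\Cal E(f)$. This gives $\Cal E(f)\subseteq T^{-1}(\Cal E(f))$ modulo a null set, and since $T$ is measure-preserving, $\mu(T^{-1}\Cal E(f))=\mu(\Cal E(f))$, upgrading the inclusion to an almost-everywhere equality. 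Ergodicity of $T$ then forces $\alpha(f)\in\{0,1\}$.

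For the equivalences: $\alpha(f)=1$ forces the open set $\mathbb{I}^2\setminus\Cal E(f)$ to have measure zero and hence to be empty, so $\Cal E(f)=\mathbb{I}^2$; and $\alpha(f)=0$ forces $\Cal E(f)$ to have empty interior (every non-empty open set in $\mathbb{I}^2$ has positive measure), which for the closed set $\Cal E(f)$ means nowhere dense. The main obstacle I anticipate is cleanly handling the discontinuity of $T$ along the grid lines when lifting the inclusion $T(E(f))\subseteq E(f)$ to $\Cal E(f)$, but since that grid is Lebesgue-null the ergodicity argument accommodates it without trouble.
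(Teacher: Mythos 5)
Your proof is correct, and it takes a genuinely different route from the paper's. You derive the dichotomy from the ergodicity of the product shift $T(u,v)=(\{pu\},\{pv\})$ on the square together with the identity $T\bigl(e_k^f(x)\bigr)=e_{k-1}^f(x)$, which makes $\Cal E(f)$ invariant modulo null sets; the discontinuity of $T$ along the grid $\{u=c/p\}\cup\{v=d/p\}$, the boundary of $\mathbb I^2$, and the edge case $T(E_1(f))=\{(0,0)\}\not\subseteq E(f)$ are all absorbed into null sets, and your upgrade from ``$A\subseteq T^{-1}A$ mod null and $\mu(T^{-1}A)=\mu(A)$'' to ``$\mu(A\mathbin{\triangle} T^{-1}A)=0$'', followed by the standard equivalence of strict and almost-everywhere invariance for an ergodic measure-preserving map, is exactly right. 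The paper instead argues by hand in two cases: if $\Cal E(f)$ contains a base-$p$ square $S$, it uses the $1$-Lipschitz property to translate the accumulation inside $S$ into accumulation everywhere in $\mathbb I^2$; if $\Cal E(f)$ is nowhere dense, it builds an explicit countable family of disjoint open squares of total measure $1$ avoiding $E(f)$ (a self-similar ``forbidden pattern'' construction). Your argument is shorter and more conceptual, and notably it nowhere actually uses that $f$ is $1$-Lipschitz --- your appeal to $1$-Lipschitzness for the second coordinate is superfluous, since $f(x)\md p^{k-1}$ is simply the further reduction of $f(x)\md p^{k}$ --- so it yields the 0-1 law for arbitrary $f\colon\Z_p\to\Z_p$, strengthening the paper's own remark that primality of $p$ is irrelevant. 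What the paper's elementary two-case argument buys is self-containedness (no appeal to mixing of the Bernoulli shift or to the almost-invariance lemma) and, in its first case, the quantitatively sharper fact that a single full base-$p$ square contained in $\Cal E(f)$ already forces $\Cal E(f)=\mathbb I^2$.
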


We note that  although Theorem \ref{thm:Auto_0-1} has been already announced,  
see \cite[Proposition 11.15]{AnKhr}, actually in \cite{AnKhr} only 
part of the statement  is proved (the one that concerns density of $\Cal E(f)$) whereas
the part that concerns the value of the Lebesgue measure is not. Remind 
that nowhere
dense sets can nevertheless have  positive Lebesgue measures, cf. fat Cantor sets (e.g. the Smith-Volterra-Cantor set), also known as \emph{$\epsilon$-Cantor
sets}, see e.g. \cite{RealAnalys}. Nonetheless, Theorem \ref{thm:Auto_0-1} is true; a complete proof follows.
\begin{proof}[Proof of Theorem \ref{thm:Auto_0-1}]
Let $\alpha(f)>0$; we are going to prove that then $\alpha(f)=1$ and $\Cal
E(f)=\mathbb I^2$. 

Either of the two following cases is possible: 1) Some point from $\Cal E(f)$ have
an open neighbourhood (in the unit square $\mathbb I^2$) that lies completely in $\Cal E(f)$, or, on the contrary, 2) no such point in $\Cal E(f)$ exists
(thus, $\Cal E(f)$ is nowhere dense in $\mathbb I^2$ then).
We consider the two cases separately and prove that within the first one necessarily
$\alpha(f)=1$ while the second one is impossible (that is, if $\Cal E(f)$ is nowhere dense in $\mathbb I^2$ then necessarily $\alpha(f)=0$). Given
$a,b\in\R$, $a\le b$, during the proof we denote via $(a;b)$ (respectively,
via $[a;b]$) the corresponding open interval (respectively, closed segment)
of the real line $\R$; while for $c,d\in\R$ we denote via $(c,d)$ the corresponding
point on the  real plane $\R^2$.

\textbf{Case 1}: In this case,  
there exist $u,v,u',v'$, $0\le u<v\le 1$, $0\le u'<v'\le 1$ such that
the closed square $[u;v]\times [u';v']\subset\mathbb I^2$ lies completely in $\Cal E(f)$,
and every point from the open real interval $(u';v')$ is a limit (with respect
to the standard Archimedean metric in $\R$)
of some sequence of fractions 
$u'<\frac{f(a_m)\md{p^m}}{p^{m}}<v'$, where $u<\frac{a_m}{p^{m}}<v$, $m=1,2,\ldots$.
Thus, we can take $n\in\N$ and 
$w=\omega_0+\omega_1\cdot p+\cdots+\omega_{n-1}\cdot p^{n-1}$, 
where $\omega_i\in\{0,1,\ldots,p-1\}$, $i=0,1,\ldots,n-1$, so that
the square  
$$
S=\left[\frac{w}{p^{n}};\frac{w}{p^{n}}+\frac{1}{p^{n}}\right]\times
\left[\frac{f(w)\md{p^n}}{p^{n}};\frac{f(w)\md{p^n}}{p^{n}}+\frac{1}{p^{n}}\right]
$$
lies completely in $\Cal E(f)$, and every inner point $(x,y)$ of the square
$S$ \footnote{that is, $(x,y)$ has an open neighborhood that lies completely
in $S$}
is a limit as $j\to\infty$ (with respect to the standard Archimedean metric in $\R^2$) of
a sequence of inner points 
$$
(r_j,t_j)=\left(\frac{z_j+p^{N_j}\cdot w}{p^{N_j+n}},\frac{f(z_j+p^{N_j}\cdot w)\md{p^{N_j+n}}}{p^{N_j+n}}\right)\in S,
$$
where $N_j\in\N$, $z_j\in\{0,1,\ldots,p^{N_j}-1\}$.

Now, as $f$ is a 1-Lipschitz mapping
from $\Z_p$ to $\Z_p$, for every $z\in\{0,1,\ldots,p^N-1\}$ we have that
$f(z+p^N\cdot w)\equiv (f(z)\md{p^N})+p^N\cdot\xi_N(z)\pmod{p^{N+n}}$ for
a suitable $\xi_N(z)\in\{0,1,\ldots,p^n-1\}$; thus,
$$
\frac{f(z+p^N\cdot w)\md{p^{N+n}}}{p^{N+n}}=\frac{f(z)\md{p^N}}{p^{N+n}}+
\frac{\xi_N(z)}{p^n}.
$$
Hence, $\xi_{N_j}(z_j)=f(w)\md{p^n}$ for all $j=1,2,\ldots$ as all $(r_j,t_j)$ are
inner points of $S$. Therefore, every inner point $(x,y)\in S$, which can be represented as
$$
(x,y)=\left(\frac{w}{p^n}+\frac{\chi}{p^n},
\frac{f(w)\md{p^n}}{p^n}+\frac{\gamma}{p^n}\right),
$$
where $\chi$ and $\gamma$ are real numbers, $0<\chi<1$, $0<\gamma<1$, is a limit (as $j\to\infty)$
of the point sequence 
$$
(r_j,t_j)=\left(\frac{w}{p^n}+\frac{z_j}{p^{N_j}}\cdot\frac{1}{p^n},
\frac{f(w)\md{p^n}}{p^n}+\frac{f(z_j)\md{p^{N_j}}}{p^{N_j}}\cdot\frac{1}{p^n}\right)\in S.
$$
From here it follows that every inner point $(\chi,\gamma)\in\mathbb I^2$ is
a limit point of the corresponding sequence of points 
$\left(\frac{z_j}{p^{N_j}},\frac{f(z_j)\md{p^{N_j}}}{p^{N_j}}\right)$
as $j\to\infty$. This means that $\Cal E(f)=\mathbb I^2$ and thus $\alpha(f)=1$.

\textbf{Case 2}: 
No point from $\Cal E(f)$ has
an open neighbourhood that lies completely in $\Cal E(f)$; i.e., any open neighbourhood
$U$ of any point from $\Cal E(f)$ contains points from the  subset $\mathbb I^2\setminus\Cal E(f)$, which is open in $\mathbb I^2$. 

Hence, $U$ contains an open subset that lies completely in $\mathbb I^2\setminus\Cal E(f)$ (we assume that $\mathbb I^2\setminus\Cal E(f)\ne\emptyset$ since otherwise
$\alpha(f)=1$ and there is nothing to prove).
Then there exists an open square 
$$
T_m(a,b)=\left(\frac{a}{p^{m}};\frac{a}{p^{m}}+\frac{1}{p^{m}}\right)\times
\left(\frac{b}{p^{m}};\frac{b}{p^{m}}+\frac{1}{p^{m}}\right),
$$
where $a,b\in\{0,1,\ldots,p^m-1\}$, that lies completely in $\mathbb I^2\setminus\Cal E(f)$. That is, $T_m(a,b)$ contains no points of the form 
$$
\left(\frac{x\md p^k}{p^{k}},\frac{f(x)\md p^k}{p^{k}}\right),
$$
where $x\in\Z_p$ and $k\in\N$.

In other words this means that there exist words $\tilde a,\tilde b$ of length $m$ in the
alphabet $\F_p$ (which are just base-$p$ representations of
$a$ and $b$, respectively) such that, whenever the automaton $\mathfrak
A=\mathfrak A_f$ is feeded by any input word $\tilde w$  with suffix $\tilde a$, i.e., 
$w=p^{\ell+m}a+u$ where $u\in\{0,1,\ldots,p^{\ell}-1\}$, the
corresponding output word $f(w)\md{ p^{\ell+m}}=p^{\ell+m}t+v$, 
$v\in\{0,1,\ldots,p^{\ell}-1\}$, never
has the suffix $\tilde b$, i.e., $t\ne b$ for all $\ell\in\N_0$ and all 
$u\in\{0,1,\ldots,p^{\ell}-1\}$ ($u$ is the empty word if $\ell=0$).

It is clear now that given any numbers $a^\prime, b^\prime\in\{0,1,\ldots,
p^{m^\prime}-1\}$, $m^\prime\ge m$,  such that
$a^\prime\equiv a\pmod{p^m}$, $b^\prime\equiv b\pmod{p^m}$, the corresponding
open square $T_{m^\prime}(a^\prime,b^\prime)$ 
lies completely outside of
$\Cal E(f)$, i.e., contains no points of the form $\left(\frac{x\md p^k}{p^{k}},\frac{f(x)\md p^k}{p^{k}}\right)$, where $x\in\Z_p$ and $k\in\N$. Indeed, otherwise some input word $w^\prime$ with  the suffix $a^\prime$ results in the output word
with the suffix $b^\prime$;  
but, this means that
the corresponding initial subword (whose suffix is $a$) of the word $w^\prime$
results in output word whose suffix is $b$. The latter case contradicts our
choice of $a,b$.

Now take $m^\prime=im$ for $i=1,2,\ldots$ and construct inductively a collection $\Cal T_i$ that consists of $(p^{2m}-1)^{i-1}$ disjoint open squares $T_{m^\prime}(a^\prime,b^\prime)$.
The collection $\Cal T_1$ consists of the only square $T_m(a,b)$.

Given the collection $\Cal T_{i-1}$, the collection $\Cal T_{i}$ consists of all open
squares $T_{im}(a^\prime,b^\prime)$, where
$a^\prime,b^\prime\in\{0,1,\ldots,p^{im}-1\}$, $a^\prime\equiv a\pmod{p^m}$,
$b^\prime\equiv b\pmod{p^m}$, that are disjoint from all squares from the collections
$\Cal T_1,\ldots\Cal T_{i-1}$. 

That is, at the first step we obtain a collection $\Cal T_1$ that consists of
the only $p^{-m}\times p^{-m}$ square $T_1(a,b)$; 
on the second step we obtain a collection $\Cal T_2$ that consists
of $p^{2m}-1$ disjoint $p^{-2m}\times p^{-2m}$-squares; on the third step
we obtain a collection $\Cal T_3$ that consists of $(p^{2m}-1)p^{2m}-(p^{2m}-1)=(p^{2m}-1)^2$
disjoint $p^{3m}\times p^{3m}$-squares, etc.

The union $T$ of all these open squares from $\Cal T_1, \Cal T_2, \ldots$ is
open, whence, measurable, and the Lebesgue measure of $T$ is
$$
\frac{1}{p^{2m}}+ (p^{2m}-1)\cdot\frac{1}{p^{4m}}+(p^{2m}-1)^2\cdot\frac{1}{p^{6m}}+\cdots=1
$$
since all these open squares are disjoint by the construction.
On the other hand, by the construction $T$ contains no points of the form $\left(\frac{x\md p^k}{p^{k}},\frac{f(x)\md p^k}{p^{k}}\right)$, where $x\in\Z_p$ and $k\in\N$.
Consequently, $T\cap\Cal E(f)=\emptyset$; in turn, this implies that the
Lebesgue measure of $\Cal E(f)$ must be 0, i.e, that $\alpha(f)=0$. The
latter contradicts the  assumption  from the beginning of the proof.
This proves  the theorem. 
\end{proof}

\subsection{Completely transitive automata}
From Theorem \ref{thm:Auto_0-1} we immediately derive the second main result of the paper:
\begin{thm}[Criterion of complete transitivity]
\label{thm:comp-trans}
A system $\mathfrak A$ 
is completely transitive  if and only if $\alpha(f_{\mathfrak A(s_0)})=1$.
\end{thm}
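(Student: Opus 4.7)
The plan is to route the proof through the topological intermediate statement
``$\mathfrak A$ is completely transitive $\iff\Cal E(f_{\mathfrak A(s_0)})=\mathbb I^2$.''
Once this is established, the equivalence with $\alpha(f_{\mathfrak A(s_0)})=1$ follows immediately from Theorem \ref{thm:Auto_0-1}: the 0--1 law forces $\alpha(f_{\mathfrak A(s_0)})\in\{0,1\}$, and a nowhere dense $\Cal E(f_{\mathfrak A(s_0)})$ clearly cannot equal $\mathbb I^2$, so $\Cal E(f_{\mathfrak A(s_0)})=\mathbb I^2$ is equivalent to $\alpha(f_{\mathfrak A(s_0)})=1$.

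For the intermediate equivalence I would translate Definition \ref{def:auto-trans-e}(ii) into coordinates on the plot. Write $f=f_{\mathfrak A(s_0)}$ and let $\tilde w,\tilde w'\in\{0,\dots,p^n-1\}$ be the numerical values of words $w,w'$ of length $n$. A prefix word $y$ of length $\ell$ with value $\tilde y\in\{0,\dots,p^\ell-1\}$ witnesses the condition of the definition precisely when $f(\tilde y+p^\ell\tilde w)\md p^{n+\ell}=\tilde v+p^\ell\tilde w'$ for some $\tilde v\in\{0,\dots,p^\ell-1\}$; in that case the corresponding plot point $e_{n+\ell}^f(\tilde y+p^\ell\tilde w)\in E_{n+\ell}(f)$ lies in the half-open base-$p$ square
\[
Q_n(\tilde w,\tilde w')=[\tilde w/p^n,(\tilde w+1)/p^n)\times[\tilde w'/p^n,(\tilde w'+1)/p^n)\subset\mathbb I^2,
\]
and conversely any point of $E(f)\cap Q_n(\tilde w,\tilde w')$ is produced in this way by some admissible $y$. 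Hence complete transitivity is equivalent to the geometric condition that $E(f)\cap Q_n(\tilde w,\tilde w')\ne\emptyset$ for every $n$ and every pair $\tilde w,\tilde w'$.

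Since the interiors of the squares $Q_n(\tilde w,\tilde w')$ form a neighbourhood basis for every point in the interior of $\mathbb I^2$, this geometric condition is equivalent to density of $E(f)$ in $\mathbb I^2$, and hence to $\Cal E(f)=\mathbb I^2$ because $\Cal E(f)$ is closed by definition. Combined with Theorem \ref{thm:Auto_0-1} this completes the proof.

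The most delicate step is the dictionary assembled in the second paragraph: one has to keep the endianness conventions straight, so that the prefix $y$ of the input word (read first by the automaton and hence occupying the \emph{low-order} $p$-adic digits) and the suffix $w'$ of the output word (produced last and hence in the \emph{high-order} digits) are matched against the correct coordinates of $Q_n(\tilde w,\tilde w')$. Once this translation is set up correctly, the rest of the argument is essentially a direct appeal to the automata 0--1 law of Theorem \ref{thm:Auto_0-1}.
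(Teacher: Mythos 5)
Your proposal is correct and follows essentially the same route as the paper, whose own proof is just the one-line remark that the statement ``follows from Theorem \ref{thm:Auto_0-1}, cf. equivalent definition of complete transitivity in terms of words''; you have merely written out explicitly the dictionary between Definition \ref{def:auto-trans-e}(ii) and density of $E(f)$ in $\mathbb I^2$ that the paper leaves implicit. One small caveat: your clause ``conversely any point of $E(f)\cap Q_n(\tilde w,\tilde w')$ is produced in this way by some admissible $y$'' can fail for grid points coming from $E_k(f)$ with $k<n$ (e.g.\ when $\tilde w,\tilde w'$ are divisible by $p^{\,n-k}$), so in the direction $\alpha(f_{\mathfrak A(s_0)})=1\Rightarrow$ complete transitivity you should extract the witness from a point of $E(f)$ lying in the \emph{open} square $\bigl(\tilde w/p^n,(\tilde w+1)/p^n\bigr)\times\bigl(\tilde w'/p^n,(\tilde w'+1)/p^n\bigr)$, which density always supplies and which automatically forces $k>n$.
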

\begin{proof}
Follows from Theorem \ref{thm:Auto_0-1}, cf.
equivalent definition of complete transitivity in terms of words.
\end{proof}
\begin{note}
Nowhere in the proofs of Theorem \ref{thm:Auto_0-1} and of Theorem \ref{thm:comp-trans}
we used that $p$ is a prime; so both theorems are true without this limitation.
\end{note}
A finite system (i.e., the one whose set of states is finite) can be word transitive; 
the odometer 
$ x\mapsto x+1$ on $\Z_2$ serves
as an example.
On the other hand, by \cite[Theorem 11.10]{AnKhr}, 
given a finite system  $\mathfrak A$,
the set $\Cal E(f_{\mathfrak A})$ is nowhere dense; so from Theorems \ref{thm:Auto_0-1} and \ref{thm:comp-trans} it follows that a finite
system
can not be completely transitive. Thus, $\alpha (\mathfrak A(s_0))=0$ if $\mathfrak A(s_0)$ is a finite automaton. 

To construct automata $\mathfrak A$ of measure 1 (i.e., such that $\alpha(f_{\mathfrak
A})=1$) the following theorem (which is the third main result of the paper)
may be applied:
\begin{thm}[Sufficient conditions for complete transitivity]
\label{thm:comp-trans-der}
Let $f=f_{\mathfrak A}\colon\Z_p\>\Z_p$ be the automaton function of an automaton
$\mathfrak A$, and let $f$ be differentiable
everywhere in a ball $B\subset\Z_p$ of a non-zero
radius. The function $f$ is of  measure 1 whenever the following two
conditions hold simultaneously:
\begin{enumerate}
\item $f(B\cap\N_0)\subset\N_0$;
\item  $f$ is two times differentiable at some point $v\in B\cap\N_0$, and
$f^{\prime\prime}(v)\ne 0$. 
\end{enumerate}
\end{thm}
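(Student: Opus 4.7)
The plan is to invoke the automata 0-1 law (Theorem \ref{thm:Auto_0-1}) to reduce matters to exhibiting a single open rectangle inside $\Cal E(f_{\mathfrak A})$. Since $\alpha(f_{\mathfrak A})\in\{0,1\}$, it is enough to show that $\Cal E(f_{\mathfrak A})$ is not nowhere dense in $\mathbb I^2$, and by Case 1 in the proof of Theorem \ref{thm:Auto_0-1} this in turn amounts to producing a single interior point of $\Cal E(f_{\mathfrak A})$: once one is produced, the entire unit square is swept out.

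I would produce such an interior point by zooming in at $v$. Using the second-order $p$-adic Taylor expansion at $v$,
\begin{equation*}
f(v+h)=f(v)+a\,h+b\,h^2+\rho(h),\qquad a=f'(v),\ b=\tfrac12 f''(v)\ne0,\ \|\rho(h)\|_p=o(\|h\|_p^2),
\end{equation*}
I take integer inputs $x_n=v+p^N n$ for $0\le n<p^L$, with $N$ large enough that $v+p^N\Z_p\subset B$; by condition (i), $f(x_n)\in\N_0$. For fixed $L$ and for $N$ sufficiently large relative to $L$, the $o(\|h\|_p^2)$ bound forces $\|\rho(p^N n)\|_p\le p^{-(2N+L+1)}$ uniformly in $n$, so at resolution $k=2N+L$ one obtains the clean congruence
\begin{equation*}
f(x_n)\equiv f(v)+a\,p^N n+b\,p^{2N} n^2\pmod{p^{2N+L}}.
\end{equation*}
The resulting $p^L$ plot points $e_k^{f}(x_n)\in\mathbb I^2$ have horizontal coordinates equally spaced by $p^{-(N+L)}$ along a strip of length $p^{-N}$, and vertical coordinates determined entirely by the residue $[a\,p^N n+b\,p^{2N} n^2]\md p^{2N+L}$.

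The main step, and the main obstacle, is to prove that these points densely fill a fixed open sub-rectangle of the strip as $L\to\infty$. The horizontal coordinates automatically become dense, so the task reduces to an equidistribution statement for the vertical coordinates. Since $b$ has finite $p$-adic valuation $\beta$, an elementary argument (completion of the square when $p$ is odd, with a parallel analysis accommodating the factor $\tfrac12$ when $p=2$) reduces the equidistribution of $[an+bn^2]\md p^L$ to that of $z^2\md p^L$, whose image occupies a positive proportion of $\Z/p^L\Z$ and becomes equidistributed in its top digits as $L\to\infty$. This yields an open rectangle of dimensions comparable to $p^{-N}\times p^{-N-\beta}$, every sub-rectangle of which is eventually met by the plot; this produces the required interior point of $\Cal E(f_{\mathfrak A})$, and Theorem \ref{thm:Auto_0-1} then delivers $\alpha(f_{\mathfrak A})=1$.
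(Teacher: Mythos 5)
Your reduction via Theorem \ref{thm:Auto_0-1} is sound: it does suffice to exhibit a single interior point of $\Cal E(f)$. But the construction you propose for producing one has two genuine gaps. First, the second-order Taylor--Peano expansion $f(v+h)=f(v)+f'(v)h+\tfrac12 f''(v)h^2+\rho(h)$ with $\|\rho(h)\|_p=o(\|h\|_p^2)$ does not follow from the hypotheses. In the $p$-adic setting the mean value theorem fails, and ``$f$ differentiable on $B$ with $f'$ differentiable at $v$'' does not imply the Peano remainder form. Concretely, let $g\left(\sum a_ip^i\right)=\sum a_ip^{2i}$ for $p=2$; then $\|g(x)-g(y)\|_2=\|x-y\|_2^2$, so $g'\equiv g''\equiv 0$ and $g$ is $1$-Lipschitz with $g(\N_0)\subset\N_0$. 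The function $f(x)=x^2+g(x)$ satisfies every hypothesis of the theorem ($f''\equiv 2\ne 0$), yet $f(v+h)-f(v)-f'(v)h-h^2=g(v+h)-g(v)$ has norm exactly $\|h\|_2^2$, not $o(\|h\|_2^2)$. So your ``clean congruence'' modulo $p^{2N+L}$ is unavailable for functions the theorem must cover. The paper avoids this by never expanding $f$ to second order: it uses the first-order expansion of $f'$ at $v$ (which is precisely what the existence of $f''(v)$ gives, see \eqref{eq:no_lac_0}) to control $f'(v+p^{r+k}\zeta)$, and then a separate first-order expansion of $f$ at the shifted point.

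Second, and more structurally: even granting your expansion, the inputs $x_n=v+p^Nn$, $0\le n<p^L$, all have zero high-order digits, so at resolution $k=2N+L$ their horizontal coordinates lie in an interval of length about $p^{-N}$ adjacent to the left edge of $\mathbb I^2$. Since $N$ must grow with $L$ (the $o(\|h\|_p^2)$ bound yields your congruence only for $N$ large relative to $L$), each stage produces a finite point set confined to a strip of width $p^{-N(L)}\to 0$; the accumulation set of the whole construction therefore lies on the segment $\{0\}\times[0,1]$, which has empty interior, and no interior point of $\Cal E(f)$ is obtained. The missing ingredient is a mechanism for prescribing the high-order input digits $u$ while simultaneously steering the high-order output digits to a prescribed $z$. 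That is exactly what the paper's choice $a=v+p^{r+k}\zeta+p^n(1+p^{k+r+s}u)$ accomplishes: the low-order free parameter $\zeta$ is obtained by solving the \emph{linear} congruence $\zeta\xi\equiv\hat z-u f'(v)\pmod{p^k}$, solvable because $f''(v)=p^s\xi$ with $\xi$ a unit. In other words, the role of $f''(v)\ne 0$ is to make the local slope $f'(v+p^{r+k}\zeta)$ adjustable to any target residue, not to supply a quadratic term whose values equidistribute; your proposed reduction to equidistribution of $z^2\bmod p^L$ in its top digits would additionally require a Gauss/Weyl-sum input, but the two gaps above are what actually break the argument.
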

\begin{proof}
We will show that for every sufficiently large $k$ and every $z,u\in\{0,1,\ldots,
p^k-1\}$ there exists $M=M(k)$ and  $a\in\{0,1,\ldots,p^M-1\}$ such that 
\begin{equation}
\label{eq:no_lac_00}
\left|\frac{a}{p^M}-\frac{u}{p^k}\right|<\frac{1}{p^k}\ \text{and}\
\left|\frac{f(a)\md p^M}{p^M}-\frac{z}{p^k}\right|<\frac{1}{p^k}.
\end{equation} 
This will prove the theorem  
as every point from the unit square $\mathbb I^2$ can
be approximated by points of the form $\left(\frac{u}{p^k},\frac{z}{p^k}\right)$.

Briefly, our idea of the proof is as follows: 
As $v\in\N_0$, there exists $k\in\N_0$ such that all terms $\nu_i\in\{0,1,\ldots,p-1\}$ in the $p$-adic expansion
$v=\sum_{i=0}^\infty\nu_i\cdot p^i$ are zero, for all $i\ge k$.
We then somehow tweak $v$: Namely, we replace zeros in the $p$-adic expansion
at positions starting
with $\ell$-th, $\ell>k$, by certain other letters from $\{0,1,\ldots,p-1\}$
so that the tweaked $v$, the natural number $a=v+p^\ell t$,  satisfies inequalities
\eqref{eq:no_lac_00} for some $M$. 

As $f$ is differentiable everywhere in $B$, for $x\in B$ we have that given arbitrary $K\in\N$, the following congruence
holds for all $h\in\Z_p$ and all 
sufficiently large $L\in\N$: 
\begin{equation}
\label{eq:der}
f(x+p^L h)\equiv f(x)+p^L h\cdot f^\prime(x)\pmod{p^{K+L}}. 
\end{equation} 
Indeed, given $a,b\in\Z_p$, the condition $\|a-b\|_p\le p^{-d}$ is equivalent
to the condition $a\md{p^d}=b\md{p^d}$, where $\md{p^d}$ is a reduction modulo
$p^d$, cf. \eqref{eq:md=ineq}; so \eqref{eq:der} is just re-statement of a condition of differentiability
of a function at a point, in terms of congruences rather than in terms of
inequalities for $p$-adic absolute values: we just write $a\equiv b\pmod{p^d}$
instead of $\|a-b\|_p\le p^{-d}$.

Let $\|f^{\prime\prime}(v)\|_p=p^{-s}$; that is, 
$f^{\prime\prime}(v)=p^s\cdot\xi$, where $s\in\Z$ and
$\xi$ is a unity of $\Z_p$ (in other words, $\xi$ has a multiplicative inverse
in $\Z_p$). 
Note that  $s$ is not necessarily
non-negative since $f^{\prime\prime}(v)$ is in $\Q_p$, and not necessarily
in $\Z_p$; nonetheless further in the proof we assume that $k+s>0$ as we
may take $k$ large enough. Remind that $\|f^\prime(x)\|_p \le 1$
as $f$ is 1-Lipschitz; so $f^\prime(x)\in\Z_p$.

Now let $r\in\N$ be an arbitrary number such that $r>s$, $p^r>v$, and $p^{-r}$ is less than
the radius of the ball $B$ (it is clear that there are infinitely many choices
of $r$). 
Given $r$, consider $n\in\N$ such that
$n>\max\{\log_p f(v+p^{k+r}t)\colon t=0,1,2,\ldots,p^k-1\}$ and
$n>2k+2r+2s$ (we remind that in view of condition
2 of the theorem, all $f(v+p^{k+r}t)$ are in $\N_0$ due to our choice of
$n$). Put
\begin{align}
\label{eq:no_lac_1}
\tilde u&=1+p^{k+r+s}u\\
\label{eq:no_lac_2}
\tilde z&=f^\prime(v)+p^{k+r+s}\hat z,
\end{align}
where $\hat z\in\{0,1,\ldots,p^k-1\}$  is such that $\lfloor\frac{\tilde z}{p^{k+r+s}}\rfloor\md p^k=z$. In other words, we choose  $\hat z$ in
such a way that the number whose
base-$p$ expansion stands in positions from $(k+r+s)$-th to $(2k+r+s-1)$-th
in the canonical $p$-adic expansion of $\tilde z$, is equal to $z$. Obviously,
given $f^\prime(v)$ and $z$, there exists a unique $\hat z$ that satisfies
this condition: $\hat z\equiv z-\lfloor\frac{f^\prime(v)}{p^{k+r+s}}\rfloor\pmod{p^k}$;
so
\begin{equation}
\label{eq:no_lac_more}
\tilde z\md p^{2k+r+s}=(f^\prime(v)\md p^{k+r+s}) +p^{k+r+s}\cdot z.
\end{equation}

As $f$ is two times differentiable at $v$, for every $\zeta\in\{0,1,\ldots p^k-1\}$ 
we conclude that
\begin{equation}
\label{eq:no_lac_0}
f^\prime(v+p^{r+k}\zeta)\equiv f^\prime(v)+p^{r+k}\zeta\cdot f^{\prime\prime}(v)
\pmod{p^{2k+r+s}}
\end{equation}
for all sufficiently large $r$ (formally, we just substitute $f^\prime$ for
$f$, $v$ for $x$,
$\zeta$ for $h$, $k+s$ for $K$, and $r+k$ for $L$
in  \eqref{eq:der}). 
From here we deduce that as $f$ is  differentiable in $B$,  the following congruence holds for all sufficiently large $n$:
\begin{multline}
\label{eq:no_lac_3}
f(v+p^{r+k}\zeta+p^n\tilde u)\equiv f(v+p^{r+k}\zeta)+\\
p^n\tilde u\cdot(f^\prime(v)+p^{r+k}\zeta\cdot f^{\prime\prime}(v))\pmod{p^{n+2k+r+s}}.
\end{multline}
Note that the latter congruence is obtained by combination of congruence \eqref{eq:der}
where $K=2k+r+s$, $x=v+p^{r+k}\zeta$, $h=\tilde u$ and $L=n$, with congruence \eqref{eq:no_lac_0}. 

We claim that there exists $\zeta\in\{0,1,\ldots p^k-1\}$ such that
\begin{equation}
\label{eq:no_lac}
\tilde u\cdot(f^\prime(v)+p^{r+k}\zeta\cdot f^{\prime\prime}(v))
\equiv \tilde z\pmod{p^{2k+r+s}}.
\end{equation}
Indeed, in view of \eqref{eq:no_lac_1}--\eqref{eq:no_lac_2} this congruence is equivalent to the congruence
$(1+p^{k+r+s}u)\cdot (f^\prime(v)+p^{r+k}\zeta\cdot f^{\prime\prime}(v))
\equiv f^\prime(v)+p^{k+r+s}\hat z\pmod{p^{2k+r+s}}$, and the latter congruence
is equivalent to the congruence
$f^\prime(v)+p^{r+k}\zeta\cdot f^{\prime\prime}(v)\equiv
(1-p^{k+r+s}u)\cdot(f^\prime(v)+p^{k+r+s}\hat z)\pmod{p^{2k+r+s}}$ as 
$(1+p^{k+r+s}u)^{-1}\equiv 1-p^{k+r+s}u\pmod{p^{2k+r+s}}$. That is, congruence
\eqref{eq:no_lac} is equivalent to the congruence 
$p^{k+r}\zeta\cdot f^{\prime\prime}(v)\equiv p^{k+r+s}\hat z-p^{k+r+s}u\cdot
f^\prime(v)\pmod{p^{2k+r+s}}$. Further, as $f^{\prime\prime}(v)=p^s\xi$,
the latter congruence is equivalent to the congruence
$\zeta\xi\equiv \hat z-u\cdot f^\prime(v)\pmod{p^k}$. From here we find 
$\zeta\equiv\xi^{-1}\cdot(\hat z-u\cdot f^\prime(v))\pmod{p^k}$, thus proving
our claim (we remind that  $\xi$ is a unity of $\Z_p$; hence, 
$\xi$ has a multiplicative inverse
$\xi^{-1}$ modulo $p^k$).

Now we put $M=n+2k+r+s$ and $a=v+p^{r+k}\zeta +p^n\cdot(1+p^{k+r+s}u)$; then
$$
\frac{a}{p^M}=\frac{u}{p^k}+\frac{v+p^{r+k}\zeta+p^n}{p^{n+2k+r+s}},
$$
so $\left|\frac{a}{p^M}-\frac{u}{p^k}\right|<\frac{1}{p^k}$, since $v<p^r$, $\zeta<p^k$,
and $n>2r+2s+2k$. Also, combining \eqref{eq:no_lac},
\eqref{eq:no_lac_2}, \eqref{eq:no_lac_more}, and \eqref{eq:no_lac_3}, we see that
\begin{equation}
\label{eq:no_lac_4}
\frac{f(a)\md p^M}{p^M}=\frac{z}{p^k}+\frac{f(v+p^{r+k}\zeta)}{p^n}\cdot
\frac{1}{p^{2k+r+s}}+\frac{f^\prime(v)\md p^{k+r+s}}{p^{k+r+s}}\cdot\frac{1}{p^{k}},
\end{equation}
since $f(a)\md p^M=f(v+p^{r+k}\zeta)+p^n\cdot (f^\prime(v)\md p^{k+r+s})+p^{n+k+r+s}z$
(the number in the right-hand side is less than $p^M$ due to our choice of
$n$). 
Now from \eqref{eq:no_lac_4} it follows that 
$\left|\frac{f(a)\md p^M}{p^M}-\frac{z}{p^k}\right|<\frac{1}{p^k}$ since
$0\le f(v+p^{r+k}\zeta)\le p^n-1$ 
due to our
choice of $n$. 
\end{proof}
\begin{note}
\label{note:minus}
We note that $\alpha(f(x))=\alpha(-f(x))=\alpha(f(-x))$ for every 1-Lipschitz
function $f\colon\Z_p\>\Z_p$ of variable $x$; so we may replace condition
1 of Theorem \ref{thm:comp-trans-der} by either of conditions $f(B\cap-\N_0)\subset\N_0$,
$f(B\cap\N_0)\subset-\N_0$, or $f(B\cap-\N_0)\subset-\N_0$, where $-\N_0=\{0,-1,-2,\ldots\}$.

Indeed, for every $c\in\N$ and every
$n\in\N$ we have that $\frac{-c\md p^n}{p^n}=\frac{p^n-(c\md p^n)}{p^n}=
1-\frac{c\md p^n}{p^n}$. Thus, a symmetry with respect to the axis 
$y=\frac{1}{2}$ of the unit square $\mathbb I^2\subset\R^2$ 
maps the subset 
$$
E(f)=\left\{\left(\frac{x\md p^n}{p^n},\frac{f(x)\md p^n}{p^n}\right)\colon x\in\Z_p,
n\in\N\right\}\subset\mathbb I^2
$$ 
onto the subset $E(-f)$ and vice versa; so $\alpha(f(x))=\alpha(-f(x))$. A similar argument proves that $\alpha(f(x))=\alpha(f(-x))$.
\end{note}
By using Theorem \ref{thm:comp-trans-der}, one may construct numerous 
automata (and systems) that are completely transitive.
For instance, given $c\in\{2,3,4,\ldots\}$, listed below are examples of automata functions $f_{\mathfrak A(s_0)}=f$
that satisfy Theorem \ref{thm:comp-trans-der}; so the corresponding
 automata  $\mathfrak A(s_0)$ are completely transitive:
\begin{itemize}
\item $f(x)=cx+c^x$ if  $c\ne 1$, $c\equiv 1\pmod p$;
\item $f(x)=(x\AND c)+((x^2)\OR c)$ if $p=2$.
\end{itemize} 
Note that the
first of these automata is word transitive  while the second one is not.

With the use of Theorem
\ref{thm:comp-trans-der} new types of absolutely transitive automata can
be constructed as well. 
The following corollary from Theorem \ref{thm:comp-trans-der} is a key tool
in the construction of these:
\begin{cor}
\label{cor:abs-t-diff}
Let an automaton function $f=f_{\mathfrak A(s_0)}$ map $\N_0$ into $\N_0$,
let $f$ be two times differentiable on $\Z_p$, and let
$f^{\prime\prime}(x)$ have no more than a finite number of zeros in $\N_0$.
Then the automaton $\mathfrak A(s_0)$ is absolutely transitive.
\end{cor}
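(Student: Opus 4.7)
The plan is to reduce the corollary to complete transitivity at each reachable state $s$ and then to invoke Theorem \ref{thm:comp-trans-der} for the corresponding automaton function. Concretely, by Definition \ref{def:auto-trans} together with Theorem \ref{thm:comp-trans}, it suffices to prove that $\alpha(g_s)=1$ for every state $s$ reachable from $s_0$, where $g_s:=f_{\mathfrak A(s)}$.

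The key step, which I would carry out first, is a shift identity expressing $g_s$ in terms of $f$. Since $s$ is reachable from $s_0$, there is a finite input word $\tilde w\in\{0,1,\ldots,p^\ell-1\}$ of length $\ell$ driving $\mathfrak A$ from $s_0$ to $s$. Because $f$ is 1-Lipschitz, the first $\ell$ output digits produced on input $\tilde w+p^\ell y$ depend only on $\tilde w$, yielding
$$f(\tilde w+p^\ell y)=c+p^\ell\cdot g_s(y),\qquad y\in\Z_p,$$
with $c:=f(\tilde w)\md p^\ell$. Differentiating gives $g_s'(y)=f'(\tilde w+p^\ell y)$ and $g_s''(y)=p^\ell\,f''(\tilde w+p^\ell y)$, so $g_s$ inherits twice-differentiability from $f$ on the ball $B:=\Z_p$.

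Next I would verify the two hypotheses of Theorem \ref{thm:comp-trans-der} for $g_s$ with $B=\Z_p$. For condition (i): if $y\in\N_0$ then $\tilde w+p^\ell y\in\N_0$, so $f(\tilde w+p^\ell y)\in\N_0$ by assumption; this non-negative integer is congruent to $c$ modulo $p^\ell$ with $c\in\{0,\ldots,p^\ell-1\}$, hence it is at least $c$, so $g_s(y)=(f(\tilde w+p^\ell y)-c)/p^\ell\in\N_0$. For condition (ii): the arithmetic progression $\{\tilde w+p^\ell v:v\in\N_0\}$ is an infinite subset of $\N_0$, while by hypothesis $f''$ has only finitely many zeros in $\N_0$; hence some $v\in\N_0$ satisfies $f''(\tilde w+p^\ell v)\ne 0$, whence $g_s''(v)\ne 0$.

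Theorem \ref{thm:comp-trans-der} then yields $\alpha(g_s)=1$, which by Theorem \ref{thm:comp-trans} makes $\mathfrak A(s)$ completely transitive. As $s$ was an arbitrary reachable state, $\mathfrak A(s_0)$ is absolutely transitive. The only non-routine part is the shift identity relating $g_s$ to $f$; once that is in place, the hypotheses of Theorem \ref{thm:comp-trans-der} transfer from $f$ to each $g_s$ by inspection, and no genuine obstacle remains.
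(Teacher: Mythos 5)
Your proof is correct, and it takes a genuinely different route from the paper's. The paper does not pass through the state-shifted automaton functions at all: instead it re-enters the \emph{proof} of Theorem \ref{thm:comp-trans-der}, observes that the approximating input $a=v+p^\ell t$ constructed there can be arranged to carry an arbitrarily prescribed low-order part $\tilde g$ (because $f''$ has only finitely many zeros in $\N_0$, one can pick $v=g+p^mr$ with $f''(v)\ne0$), and then reads off condition (iii) of Definition \ref{def:auto-trans-e} directly from the approximation inequality \eqref{eq:no_lac_00}. Your argument instead uses the two theorems as black boxes: the shift identity $f(\tilde w+p^\ell y)=\bigl(f(\tilde w)\md p^\ell\bigr)+p^\ell g_s(y)$ (which is the standard subtree decomposition of a 1-Lipschitz map and is correctly justified from the automaton semantics), the resulting transfer $g_s''(y)=p^\ell f''(\tilde w+p^\ell y)$, and the observation that an infinite arithmetic progression in $\N_0$ must avoid the finite zero set of $f''$. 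This yields $\alpha(g_s)=1$ for every reachable $s$ and hence absolute transitivity via Definition \ref{def:auto-trans} and Theorem \ref{thm:comp-trans}. What your route buys is modularity and a cleaner logical structure (no dependence on the internals of the proof of Theorem \ref{thm:comp-trans-der}, and an explicit reduction of absolute transitivity to complete transitivity at each state); what the paper's route buys is that it never needs the shift identity or the differentiation of the conjugated function, working entirely with the original $f$ and with prefixes of input words. Both are sound; I see no gap in yours.
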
 
\begin{proof}
Given  a finite non-empty word $\tilde g$ (say, of length $m>0$) over the
alphabet $\F_p$, take a finite
word $\tilde v$ whose prefix is $\tilde g$ and
such that the corresponding non-negative rational integer $v$~\footnote{The
one whose base $p$-expansion is $\tilde v$; remind that according to our
conventions words are read from
right to left, that is the rightmost letters of $\tilde v$ correspond to low order digits in the base-$p$ expansion of $v$.} 
is a non-zero of $f^{\prime\prime}$:
$f^{\prime\prime}(v)\ne 0$. The word $\tilde v$ that satisfies 
these conditions simultaneously 
exists
as $f^{\prime\prime}$ has not more than a finite number of zeros in $\N_0$
(fixing arbitrary $\tilde g$ means that only some less
significant digits in the base-$p$ expansion of $v$ are fixed);
so by taking $v$ whose base-$p$ expansion 
is sufficiently long (thus making $v$ large enough), we find $v\in\N_0$ such that $f^{\prime\prime}(v)\ne
0$ and the $m$-letter prefix of the word $\tilde v$ is $\tilde g$. 

In other
words, given an arbitrary finite word $\tilde g$ 
over the alphabet $\F_p$,  by properly choosing $r\in\N_0$ we find a positive
rational integer $v=g+p^m r$ (where $g\in\{0,1,\ldots,p^m-1\}$, $\tilde g$ is
a base-$p$ expansion of $g$)  such that $f^{\prime\prime}(v)\ne 0$. This
is possible due to the finiteness of a number of zeros of $f^{\prime\prime}$
in $\N_0$.
We see that both $f$ and the so constructed $v$ satisfy conditions of Theorem \ref{thm:comp-trans-der}: just assume that the ball $B$ from the conditions
of the mentioned theorem is the whole space $\Z_p$.

Now note that the claim stated at the very beginning of the proof of Theorem
\ref{thm:comp-trans-der} is just a re-statement of (ii) from Definition 
\ref{def:auto-trans-e}: Indeed, under notation of Definition \ref{def:auto-trans-e}
and the one from the beginning of the  proof of Theorem \ref{thm:comp-trans-der}, the concatenation $w\circ
y$ corresponds to $a$, $w$ corresponds to $u$, $w^\prime$ corresponds to $z$, and
$w$ is a $k$-letter
suffix of the output word which is a  base-$p$ expansion for $f(a)\md{p^M}$, whereas
$M$ is the length of the word $w\circ y$. Up to these correspondences, condition \eqref{eq:no_lac_00}
is equivalent to (ii) from  Definition \ref{def:auto-trans-e}.
Furthermore,  as the word $\tilde v$ has an arbitrarily chosen
prefix $\tilde g$, and as the condition \eqref{eq:no_lac_00} holds for  $a=v+p^\ell t$ from the proof of Theorem \ref{thm:comp-trans-der}
(as the whole
Theorem \ref{thm:comp-trans-der} holds for $f$ and $v$), 
(ii)
from Definition \ref{def:auto-trans-e} holds for input word with arbitrarily
chosen prefix $\tilde g$, up to all  mentioned correspondences. This means that  
(iii) from Definition \ref{def:auto-trans-e} also holds 
for $x=\tilde g$ in the case under consideration. The latter finally proves Corollary
\ref{cor:abs-t-diff}.
\end{proof}
We remark that Note \ref{note:minus} can be applied to Corollary \ref{cor:abs-t-diff}
as well.

Note also that the only type of absolutely transitive automata  $\mathfrak A(s_0)$ were known earlier: The ones whose automata functions are polynomials over $\Z$  of degree greater than
1,  
see \cite[Theorem 11.11]{AnKhr}. The latter assertion follows from Corollary
\ref{cor:abs-t-diff}. Yet many other types automata can be proved to be
absolutely transitive as well by using the corollary. For instance, an automaton whose both
input and output alphabets are $\F_2$ and whose automata function is $f(x)=a+bx+((x^2)\OR
c)$, where $a,b,c\in\N_0$, is absolutely transitive: This easily follows 
from Corollary
\ref{cor:abs-t-diff} as  $f(\N_0)\subset\N_0$ and $f^{\prime\prime}(x)=2$
for all $x\in\Z_2$.

\section{Discussion}
\label{sec:Concl}
In the paper, by combining tools from $p$-adic and real analysis and automata
theory we have shown that discrete systems (automata) with respect to the transitivity of their actions on finite
words constitute two classes,
the systems whose  real plots  have Lebesgue measures 1 (equivalently, the completely
transitive systems; i.e. such that given two arbitrary words $w$, $w^\prime$ of equal lengths, the system transforms
$w$ into $w^\prime$) and  systems whose real plots have Lebesgue measures 0.
Also we have found conditions for complete transitivity of a system; the conditions yield a method
to construct numerous completely transitive automata and
respective automata functions, especially
the ones that are combined from standard computer instructions and thus
are easily programmable.
The ergodic completely transitive automata are
preferable in constructions of various pseudo-random number generators aimed
at
cryptographic and/or simulation usage; e.g., in stream ciphers and quasi
Monte-Carlo methods.


\begin{thebibliography}{99}

\bibitem{RealAnalys}
Charalambos~D. Aliprantis and Owen Burkinshaw.
\newblock {\em Principles of real analysis}.
\newblock Academic Press, Inc., third edition, 1998.

\bibitem{Allouche-Shall}
J.-P. Allouche and J.~Shallit.
\newblock {\em Automatic Sequences. Theory, Applications, Generalizations}.
\newblock Cambridge Univ. Press, 2003.

\bibitem{me-NATO}
V.~Anashin.
\newblock Non-{A}rchimedean theory of {T}-functions.
\newblock In {\em Proc. Advanced Study Institute Boolean Functions in
  Cryptology and Information Security}, volume~18 of {\em NATO Sci. Peace
  Secur. Ser. D Inf. Commun. Secur.}, pages 33--57, Amsterdam, 2008. IOS Press.

\bibitem{me-CJ}
V.~Anashin.
\newblock Non-{A}rchimedean ergodic theory and pseudorandom generators.
\newblock {\em The Computer Journal}, 53(4):370--392, 2010.


\bibitem{AnKhr}
V.~Anashin and A.~Khrennikov.
\newblock {\em Applied Algebraic Dynamics}, volume~49 of {\em de Gruyter
  Expositions in Mathematics}.
\newblock Walter~de~Gruyter GmbH \& Co., Berlin---N.Y., 2009.

\bibitem{me:1}
V.~S. Anashin.
\newblock Uniformly distributed sequences of $p$-adic integers.
\newblock {\em Mathematical Notes}, 55(2):109--133, 1994.

\bibitem{me:ex}
V.~S. Anashin.
\newblock Uniformly distributed sequences in computer algebra, or how to
  construct program generators of random numbers.
\newblock {\em J. Math. Sci.}, 89(4):1355--1390, 1998.

\bibitem{me:2}
V.~S. Anashin.
\newblock Uniformly distributed sequences of $p$-adic integers, {II}.
\newblock {\em Discrete Math. Appl.}, 12(6):527--590, 2002.

\bibitem{Bra}
W.~Brauer.
\newblock {\em Automatentheorie}.
\newblock B.~G.~Teubner, Stuttgart, 1984.

\bibitem{Gouvea:1997}
F.~Q. Gouv{\^e}a.
\newblock {\em $p$-adic Numbers, An Introduction}.
\newblock Springer-Verlag, Berlin--Heidelberg--New York, second edition, 1997.

\bibitem{Grigorch_auto2_eng}
R.~I. Grigorchuk.
\newblock Some topics in the dynamics of group actions on rooted trees.
\newblock {\em Proc. Steklov Institute of Mathematics}, 273:64--175, 2011.

\bibitem{Grigorch_auto}
R.~I. Grigorchuk, V.~V. Nekrashevich, and V.~I. Sushchanskii.
\newblock Automata, dynamical systems, and groups.
\newblock {\em Proc. Steklov Institute of Mathematics}, 231:128--203, 2000.

\bibitem{KaFaArbib}
R.~E. Kalman, P.~L. Falb, and M.~A. Arbib.
\newblock {\em Topics in mathematical system theory}.
\newblock McGraw-Hill, N.~Y., 1969.

\bibitem{Kat}
S.~Katok.
\newblock {\em $p$-adic analysis in comparison with real}.
\newblock Mass. Selecta. American Mathematical Society, 2003.

\bibitem{Knuth}
D.~Knuth.
\newblock {\em The {A}rt of {C}omputer {P}rogramming}, volume 2:{S}eminumerical
  {A}lgorithms.
\newblock {A}ddison-{W}esley, {T}hird edition, 1997.

\bibitem{Kobl}
N.~Koblitz.
\newblock {\em $p$-adic numbers, $p$-adic analysis, and zeta-functions},
  volume~58 of {\em Graduate texts in math.}
\newblock Springer-Verlag, second edition, 1984.

\bibitem{Lunts}
A.~G. Lunts.
\newblock The $p$-adic apparatus in the theory of finite automata.
\newblock {\em Problemy Kibernetiki}, 14:17--30, 1965.
\newblock In Russian.

\bibitem{Pin_p-adic_auto}
J.-E. Pin.
\newblock Profinite methods in automata theory.
\newblock In {\em Symposium on Theoretical Aspects of Computer Science ---
  STACS 2009}, pages 31--50, Freiburg, 2009.

\bibitem{VHDL}
Charles~H. Roth, Jr.
\newblock {\em Fundamentals of logic design}.
\newblock Thomson-Brooks-Cole, Belmont, CA, fifth edition, 2004.

\bibitem{Sch}
W.~H. Schikhof.
\newblock {\em Ultrametric calculus}.
\newblock Cambridge University Press, 1984.

\bibitem{Vuillem_circ}
J.~Vuillemin.
\newblock On circuits and numbers.
\newblock {\em IEEE Trans. on Computers}, 43(8):868--879, 1994.

\bibitem{Vuillem_fin}
J.~Vuillemin.
\newblock Finite digital synchronous circuits are characterized by 2-algebraic
  truth tables.
\newblock In {\em Advances in computing science - ASIAN 2000}, volume 1961 of
  {\em Lecture Notes in Computer Science}, pages 1--7, 2000.

\bibitem{Vuillem_DigNum}
J.~Vuillemin.
\newblock Digital algebra and circuits.
\newblock In {\em Verification: Theory and Practice}, volume 2772 of {\em
  Lecture Notes in Computer Science}, pages 733--746, 2003.

\end{thebibliography}

\end{document}